\documentclass[11pt]{amsart}
\usepackage[a4paper,margin=1in]{geometry}
\usepackage{amsmath,amsthm,amssymb,mathtools}
\usepackage[T1]{fontenc}
\usepackage{lmodern}
\usepackage[
  colorlinks=true,
  linkcolor=blue,
  citecolor=purple,
  urlcolor=blue,
  pagebackref=true
]{hyperref}
\usepackage{enumitem}
\usepackage{xcolor}
\newtheorem{theorem}{Theorem}[section]
\newtheorem{proposition}[theorem]{Proposition}
\newtheorem{lemma}[theorem]{Lemma}
\newtheorem{corollary}[theorem]{Corollary}
\theoremstyle{remark}
\newtheorem{remark}[theorem]{Remark}

\newcommand{\SL}{\mathrm{SL}}
\newcommand{\PSL}{\mathrm{PSL}}

\newcommand{\ZZ}{\mathbb Z}
\newcommand{\QQ}{\mathbb Q}

\newcommand{\eps}{\varepsilon}

\newcommand{\comm}[2]{[#1,#2]}
\DeclareMathOperator{\ord}{ord}
\DeclareMathOperator{\tr}{tr}
\DeclareMathOperator{\Cay}{Cay}

\title[Explicit uniform two-generator presentations]%
{Uniform two-generator presentations for $\SL_n(\ZZ)$
with polynomial complexity bounds}

\author{Arindam Biswas}

\address{}
\email{arin.math@gmail.com}

\date{\today}

\keywords{special linear group, two-generator presentation, Steinberg presentation, elementary transvections, Tietze transformations, finite presentations}

\subjclass[2020]{Primary 20F05, 20H05; Secondary 20E22, 20G35}

\begin{document}

\begin{abstract}
We give a uniform explicit construction of finite two-generator presentations
for the special linear groups over the integers in all ranks at least three.
The construction builds on the generating-pair work of
Conder--Liversidge--Vsemirnov and on a standard Tietze-elimination observation pointed out by Button. It recovers Trott's odd-rank generating pair and extends
the same monomial/transvection form uniformly to even rank by a sign correction.
After rebalancing, the construction has quadratic transvection words, quartically
many relators, and sextic total relator length. We also derive several consequences, including infinite--infinite and finite--finite
variants, consequences for congruence quotients, a presentation for the projective
quotient, and an exact relator count, valid for both the unbalanced and balanced
presentations.
\end{abstract}

\maketitle

\section{Introduction}

For \(n\ge 3\), let \(E_{i,j}\) be the standard matrix unit and let
\[
T_{i,j}=I_n+E_{i,j}\in \SL_n(\ZZ)\qquad (i\neq j)
\]
be the elementary transvection.  Throughout, we use the commutator convention
\[
\comm{x}{y}=x^{-1}y^{-1}xy,
\]
and an equation \(U=V\) in a presentation is interpreted as the relator
\(UV^{-1}\).  We shall use the standard Steinberg presentation
\begin{equation}\label{eq:steinberg}
\begin{split}
\SL_n(\ZZ)\cong \big\langle T_{i,j}\ (i\neq j)\ \big|\ &
[T_{i,j},T_{k,l}]=1\ \text{if }i\neq l,\ j\neq k,\\
&
[T_{i,j},T_{j,k}]=T_{i,k}\ \text{if }i,j,k\text{ distinct},\\
&
(T_{1,2}T_{2,1}^{-1}T_{1,2})^4=1
\big\rangle .
\end{split}
\end{equation}
See, for example, \cite{Milnor,Conder}.

Several explicit generating pairs for \(\SL_n(\ZZ)\) are known.  For odd \(n\),
Trott constructed a generating pair consisting of a cyclic permutation matrix and
a transvection \cite{Trott}.  Gow and Tamburini proved that \(\SL_n(\ZZ)\) is
generated by a Jordan unipotent block and its transpose for all \(n>2\) with
\(n\neq 4\) \cite{GowTamburini}.  More recently,
Conder--Liversidge--Vsemirnov exhibited several explicit generating pairs for
\(\SL_n(\ZZ)\), and in rank \(3\) gave eight explicit finite two-generator
presentations for \(\SL_3(\ZZ)\) \cite{Conder}. The formal route from a known presentation to a presentation on a new generating
set is the following.  If the old generators are given as words in the new generators,
and the new generators are also given as words in the old ones, then Tietze
transformations give a presentation on the new generators by substitution,
together with the inverse relations.  This observation was pointed out by Button
in his MathSciNet review of Conder--Liversidge--Vsemirnov \cite{ButtonMR}, where
it is credited to Philip Hall.  In rank \(3\), the finite two-generator
presentations of Conder--Liversidge--Vsemirnov arise from this substitution
procedure, applied to explicit generating pairs for \(\SL_3(\ZZ)\)
\cite{Conder}. 

One of the motivation of the present paper is to carry out this procedure uniformly
for all ranks \(n\ge 3\) for one explicit Trott-type family, and to do so with
polynomial control on the resulting words and relators.  To the best of our
knowledge, no single natural all-rank generating family for \(\SL_n(\ZZ)\),
\(n\ge 3\), had previously been accompanied by explicit all-rank transvection
words with uniform polynomial length bounds, and hence by a uniform family of
two-generator presentations with polynomially bounded number of relators and
total relator length.

We provide such an implementation for the following Trott-type family.  Define
\[
\eps_n:=(-1)^{n-1},
\]
and set
\begin{equation}\label{eq:anbn}
a_n:=\sum_{i=1}^{n-1}E_{i,i+1}+\eps_n E_{n,1},
\qquad
b_n:=I_n+E_{2,1}.
\end{equation}
Thus
\[
a_n=
\begin{pmatrix}
0&1&0&\cdots&0\\
0&0&1&\ddots&\vdots\\
\vdots&\ddots&\ddots&\ddots&0\\
0&\cdots&0&0&1\\
\eps_n&0&\cdots&0&0
\end{pmatrix},
\qquad
b_n=
\begin{pmatrix}
1&0&0&\cdots&0\\
1&1&0&\cdots&0\\
0&0&1&\ddots&\vdots\\
\vdots&\vdots&\ddots&\ddots&0\\
0&0&\cdots&0&1
\end{pmatrix}.
\]
When \(n\) is odd, this is Trott's pair \cite{Trott}.  When \(n\) is even, the
sign correction \(\eps_n=-1\) gives the same monomial/transvection shape inside
\(\SL_n(\ZZ)\).  Hence \((a_n,b_n)\) gives a single all-rank family, and in
particular avoids the rank-\(4\) exception in the Gow--Tamburini
Jordan/transposed-Jordan construction \cite{GowTamburini}.

The main step is to write every elementary transvection \(T_{i,j}\) explicitly
as a word in \(a_n,b_n\).  Substituting these words into the Steinberg
presentation gives a finite two-generator presentation of \(\SL_n(\ZZ)\).  We
then rebalance the transvection words and prove the quantitative bounds
\[
\ell(T_{i,j})=O(n^2),
\]
in the generators \(a_n,b_n\).  Consequently the resulting presentations have
\(O(n^4)\) relators and total relator length \(O(n^6)\).  The same explicit
substitution also yields infinite--infinite and finite--finite variants,
generation of congruence quotients, a presentation for the projective quotient,
and an exact relator count.

The paper is organised as follows.  Section~\ref{sec:uniform} constructs the
words for all elementary transvections in the pair \((a_n,b_n)\) and derives the
corresponding two-generator presentation from the Steinberg presentation.
Section~\ref{sec:quantitative} replaces the initial recursive words by balanced
ones and proves the \(O(n^2)\), \(O(n^4)\), and \(O(n^6)\) bounds.
Section~\ref{sec:low-rank-variants} discusses low-rank remarks and gives
explicit infinite--infinite and finite--finite variants.  Section~\ref{sec:quotients}
derives the congruence quotient statement, the projective quotient presentation,
and the exact relator count.  The final section compares the construction with
earlier explicit generating families.

\section{A uniform construction from the Steinberg presentation}\label{sec:uniform}

\subsection{A Tietze-elimination template}

We begin by recalling the standard formal mechanism that will be used throughout.
This Tietze-elimination observation was pointed out by
Button in \cite{ButtonMR}, with attribution there to Philip Hall.

\begin{proposition}\label{prop:tietze-template}
Let $G$ be a group admitting a finite presentation
\[
G\cong \langle x_1,\dots,x_m\mid R_1,\dots,R_s\rangle.
\]
Here each relator $R_j$ is regarded as a chosen word in the free group
$F(x_1,\dots,x_m)$ representing the corresponding defining relation.
Assume that $a,b\in G$ generate $G$, and that for each $i$ there is an explicit word
$w_i(a,b)\in F(a,b)$ such that $x_i=w_i(a,b)$ in $G$. Assume moreover that
$a=u(x_1,\dots,x_m)$ and $b=v(x_1,\dots,x_m)$ in $G$ for explicit words
$u,v\in F(x_1,\dots,x_m)$. Then
\[
\begin{aligned}
G\cong \big\langle a,b\ \big|\ &
R_1(w_1(a,b),\dots,w_m(a,b)),\dots,R_s(w_1(a,b),\dots,w_m(a,b)),\\
&
a=u(w_1,\dots,w_m),\ b=v(w_1,\dots,w_m)
\big\rangle.
\end{aligned}
\]
\end{proposition}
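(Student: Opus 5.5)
The plan is to pass through an intermediate presentation on the enlarged generating set $\{x_1,\dots,x_m,a,b\}$ using Tietze transformations. We then eliminate the $x_i$ and check that the resulting relators are exactly the ones listed.

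\textbf{Step 1: adjoin $a,b$.} Starting from $\langle x_1,\dots,x_m\mid R_1,\dots,R_s\rangle$, we add the new generators $a,b$ together with the defining relations $a=u(x)$ and $b=v(x)$. This is a Tietze move of the first kind, so it does not change the group. The resulting presentation
\[
\langle x_1,\dots,x_m,a,b\mid R_1,\dots,R_s,\ a=u(x),\ b=v(x)\rangle
\]
still presents $G$, with $a,b$ mapping to the given elements.

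\textbf{Step 2: add the relations $x_i=w_i(a,b)$.} By hypothesis each relation $x_i=w_i(a,b)$ holds in $G$. Since the presentation from Step~1 presents $G$, each such relation is a consequence of the existing relators, and adding consequences is a Tietze move. We now have a presentation of $G$ on $x_1,\dots,x_m,a,b$ containing all of $R_j$, $a=u(x)$, $b=v(x)$ and $x_i=w_i(a,b)$.

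\textbf{Step 3: eliminate the $x_i$.} Each relation $x_i=w_i(a,b)$ expresses $x_i$ as a word in the remaining generators. We delete the generator $x_i$ and this relation, and substitute $w_i(a,b)$ for every occurrence of $x_i^{\pm1}$ in the other relators. This is again a Tietze move. The relator $R_j$ becomes $R_j(w_1,\dots,w_m)$, and $a=u(x)$ becomes $a=u(w_1,\dots,w_m)$, similarly for $b$. What remains is exactly the claimed presentation on $a,b$.

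The only delicate point is Step~2: we must justify that $x_i=w_i(a,b)$ follows from the relators of the Step~1 presentation, and not merely that it holds in some quotient. This is where it is essential that Step~1 gives an isomorphism with $G$ under which the adjoined letters $a,b$ go to the given elements $a,b\in G$.

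As a cross-check, one can build the two maps directly. Map $a,b$ to themselves in $G$; the listed relators hold there, so this gives a homomorphism $\phi$ from the presented group $H$ to $G$, and it is surjective because $a,b$ generate $G$. In the other direction, send $x_i\mapsto w_i$; the substituted relators $R_j(w)$ make this a well-defined homomorphism $\psi\colon G\to H$. The composite $\psi\circ\phi$ fixes $a$ and $b$ because of the inverse relations $a=u(w)$ and $b=v(w)$. Hence $\phi$ is injective, and therefore an isomorphism. Finiteness of the presentation is clear, since both the relator list and the words $w_i,u,v$ are finite.
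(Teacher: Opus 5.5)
Your proposal is correct and follows the paper's proof essentially step for step. Both arguments use a Tietze expansion adjoining $a,b$ with $a=u(x)$ and $b=v(x)$, then adjoin the consequences $x_i=w_i(a,b)$ (justified because the expanded presentation still presents $G$ with $a,b$ going to the given elements), and finally eliminate the $x_i$ by substitution. Your additional cross-check is also valid and gives a direct verification that the paper does not spell out: it uses the homomorphisms $\phi\colon H\to G$ and $\psi\colon G\to H$, with $\phi$ surjective and $\psi\circ\phi=\mathrm{id}_H$.
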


\begin{proof}
Starting from the given presentation of $G$, first adjoin new generators $a,b$ with the
relations
\[
a=u(x_1,\dots,x_m),\qquad b=v(x_1,\dots,x_m).
\]
This is a Tietze expansion, so it does not change the presented group. In the resulting
presentation the relations
\[
x_i=w_i(a,b)\qquad (1\le i\le m)
\]
hold as identities in the presented group, because this expanded presentation still
presents $G$ and the words $x_iw_i(a,b)^{-1}$ are trivial there by hypothesis. Hence one
may adjoin these relations as consequences, and then eliminate the generators
$x_1,\dots,x_m$ using them. The resulting presentation is exactly the one displayed
above, and it presents the same group.
\end{proof}

\begin{remark}
Proposition~\ref{prop:tietze-template} is included to fix notation and to make
the later substitutions completely explicit. The proposition is the standard
Tietze-elimination step attributed above.
\end{remark}

We shall apply Proposition~\ref{prop:tietze-template} with the generators $x_i$ taken to
be the elementary transvections $T_{i,j}$ and with the defining relations coming from the
Steinberg presentation \eqref{eq:steinberg}.

\subsection{Elementary matrix identities}

For later use, we note the basic matrix identities underlying the Steinberg relations.
Recall that the matrix units satisfy
\[
E_{i,j}E_{k,l}=\delta_{j,k}E_{i,l},
\]
where 

$$\delta_{j,k} = \begin{cases}
1 & \text{if } j = k, \\
0 & \text{if } j \neq k.
\end{cases}$$

Since $T_{i,j}=I_n+E_{i,j}$, the commutator relations needed below can be checked
directly from this identity.

\begin{lemma}\label{lem:elementary-identities}
Let $i\neq j$ and $k\neq l$.
\begin{enumerate}[label=(\roman*)]
\item If $i\neq l$ and $j\neq k$, then
\[
T_{i,j}T_{k,l}=T_{k,l}T_{i,j}.
\]
\item If $i,j,k$ are distinct, then
\[
[T_{i,j},T_{j,k}]=T_{i,k}.
\]
\end{enumerate}
\end{lemma}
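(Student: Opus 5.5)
The plan is to expand everything with the matrix-unit rule $E_{i,j}E_{k,l}=\delta_{j,k}E_{i,l}$. The key fact is that every matrix unit $E_{i,j}$ with $i\neq j$ is nilpotent: $E_{i,j}^2=\delta_{j,i}E_{i,j}=0$. From this we get the inverse formula
\[
T_{i,j}^{-1}=I_n-E_{i,j},
\]
since $(I_n+E_{i,j})(I_n-E_{i,j})=I_n-E_{i,j}^2=I_n$.

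For part (i), expand both products:
\[
T_{i,j}T_{k,l}=I_n+E_{i,j}+E_{k,l}+E_{i,j}E_{k,l},
\qquad
T_{k,l}T_{i,j}=I_n+E_{k,l}+E_{i,j}+E_{k,l}E_{i,j}.
\]
It then suffices to show $E_{i,j}E_{k,l}=\delta_{j,k}E_{i,l}=0$ and $E_{k,l}E_{i,j}=\delta_{l,i}E_{k,j}=0$. Both vanish exactly under the hypotheses $j\neq k$ and $i\neq l$, which gives (i).

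For part (ii), with $i,j,k$ distinct, I would use the convention $[x,y]=x^{-1}y^{-1}xy$ and compute in two stages.
\begin{enumerate}[label=(\alph*)]
\item First compute $T_{i,j}^{-1}T_{j,k}^{-1}=(I_n-E_{i,j})(I_n-E_{j,k})=I_n-E_{i,j}-E_{j,k}+E_{i,k}$, using $E_{i,j}E_{j,k}=E_{i,k}$.
\item Next compute $T_{i,j}T_{j,k}=I_n+E_{i,j}+E_{j,k}+E_{i,k}$.
\item Finally multiply $(I_n+X)(I_n+Y)$, where $X=-E_{i,j}-E_{j,k}+E_{i,k}$ and $Y=E_{i,j}+E_{j,k}+E_{i,k}$. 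Since $X+Y=2E_{i,k}$, only $XY$ remains to be found.
\end{enumerate}

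To compute $XY$, expand the nine products of matrix units. A product $E_{p,q}E_{r,s}$ is nonzero only when $q=r$, and distinctness of $i,j,k$ leaves only these survivors:
\begin{itemize}
\item $(-E_{i,j})(E_{j,k})=-E_{i,k}$;
\item $(-E_{j,k})\cdot(\text{anything starting at }k)$, which is zero, since no term of $Y$ starts at $k$;
\item $(E_{i,k})\cdot(\text{anything starting at }k)$, which is likewise zero.
\end{itemize}
Also $(-E_{i,j})E_{i,j}=0$ and $(-E_{i,j})E_{i,k}=0$, because $j\neq i$. Hence $XY=-E_{i,k}$, and
\[
[T_{i,j},T_{j,k}]=I_n+2E_{i,k}-E_{i,k}=I_n+E_{i,k}=T_{i,k}.
\]

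The only step needing care is the bookkeeping in $XY$, in particular making sure every cross term vanishes by distinctness of indices. There is no conceptual obstacle; the result follows from the matrix-unit multiplication rule alone.
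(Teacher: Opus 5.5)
Your proof is correct and follows the same route as the paper: expand with $E_{i,j}E_{k,l}=\delta_{j,k}E_{i,l}$, use $T_{i,j}^{-1}=I_n-E_{i,j}$, and multiply out. Grouping the commutator as $(T_{i,j}^{-1}T_{j,k}^{-1})(T_{i,j}T_{j,k})$ just spells out the ``direct multiplication'' that the paper leaves to the reader, and your term-by-term computation of $XY=-E_{i,k}$ is right.
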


\begin{proof}
For part (i), one has
\[
T_{i,j}T_{k,l}
=(I_n+E_{i,j})(I_n+E_{k,l})
=I_n+E_{i,j}+E_{k,l}+E_{i,j}E_{k,l}.
\]
If $j\neq k$, then $E_{i,j}E_{k,l}=0$. Similarly,
\[
T_{k,l}T_{i,j}
=I_n+E_{k,l}+E_{i,j}+E_{k,l}E_{i,j},
\]
and if $l\neq i$, then $E_{k,l}E_{i,j}=0$. Under the stated hypotheses the two products
are therefore equal.

For part (ii), since $i,j,k$ are distinct, one has
\[
E_{i,j}^2=E_{j,k}^2=0,\qquad E_{j,k}E_{i,j}=0,\qquad E_{i,j}E_{j,k}=E_{i,k}.
\]
Hence
\[
T_{i,j}^{-1}=I_n-E_{i,j},\qquad T_{j,k}^{-1}=I_n-E_{j,k},
\]
and therefore
\[
[T_{i,j},T_{j,k}]
=(I_n-E_{i,j})(I_n-E_{j,k})(I_n+E_{i,j})(I_n+E_{j,k}).
\]
A direct multiplication gives
\[
[T_{i,j},T_{j,k}]=I_n+E_{i,k}=T_{i,k}.
\]
\end{proof}

\subsection{A uniform generating pair}

Although our main interest is $n\ge 4$, the same argument works for $n=3$, so we develop
it uniformly.

Fix an integer $n\ge 3$, and define $a_n,b_n$ by \eqref{eq:anbn}.  Write simply
$a=a_n$ and $b=b_n$ when the rank is clear.

\begin{lemma}\label{lem:det}
One has $a,b\in \SL_n(\ZZ)$.
\end{lemma}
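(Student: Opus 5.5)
We need to show that $a$ and $b$ have integer entries and determinant $1$. Integrality is immediate from \eqref{eq:anbn}, since every entry lies in $\{0,1,\eps_n\}\subset\ZZ$. So the work is in the two determinant computations.

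For $b=b_n=I_n+E_{2,1}=T_{2,1}$, the matrix is lower triangular with all diagonal entries equal to $1$. Hence $\det b=1$.

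For $a=a_n$, we observe that $a$ is a monomial matrix: it has exactly one nonzero entry in each row and each column. Write $a=D P_\sigma$, where $P_\sigma=\sum_{i=1}^{n-1}E_{i,i+1}+E_{n,1}$ is the permutation matrix of the cyclic permutation $\sigma$, and $D=\mathrm{diag}(1,\dots,1,\eps_n)$. Then $\det a=\eps_n\cdot\det P_\sigma=\eps_n\cdot\operatorname{sgn}(\sigma)$.

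An $n$-cycle is a product of $n-1$ transpositions, so $\operatorname{sgn}(\sigma)=(-1)^{n-1}$. Therefore
\[
\det a=(-1)^{n-1}\cdot(-1)^{n-1}=1.
\]
One can also check this directly by expanding along the first column. The only nonzero entry there is $\eps_n$, in position $(n,1)$. It carries the cofactor sign $(-1)^{n+1}$, and its complementary minor is the identity $I_{n-1}$. This gives $\det a=(-1)^{n+1}\eps_n=1$.

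The only step requiring care is the sign bookkeeping for the cyclic permutation, in particular confirming that the cofactor sign matches $\eps_n$ for both parities of $n$. The direct expansion above settles this uniformly in $n$. This is precisely why the sign correction $\eps_n$ is needed in even rank: without it, $\det a_n$ would be $-1$ for even $n$.
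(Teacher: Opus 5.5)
Your proof is correct and is essentially the paper's argument: $b$ is a (unitriangular) transvection, and $a$ is the monomial matrix of the $n$-cycle with $\eps_n$ on the wrap-around entry, so $\det a=(-1)^{n-1}\eps_n=1$. The factorization $a=DP_\sigma$, the cofactor-expansion cross-check and the integrality remark are all accurate; they simply make explicit what the paper states in one line.
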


\begin{proof}
The matrix $b$ is a transvection, hence has determinant $1$.
The matrix $a$ is monomial, corresponding to the $n$-cycle $(1\,2\,\dots\,n)$ with the
scalar $\eps_n$ on the wrap-around entry.  Therefore
\[
\det(a)=(-1)^{n-1}\eps_n=1.
\]
\end{proof}

\begin{proposition}\label{prop:orders}
For every $n\ge 3$, the generator $b_n$ has infinite order.
Moreover:
\begin{enumerate}[label=(\roman*)]
\item if $n$ is odd, then $a_n$ has order $n$;
\item if $n$ is even, then $a_n$ has order $2n$.
\end{enumerate}
In particular, when $n$ is odd the pair $(a_n,b_n)$ consists of elements of orders $n$
and $\infty$, while when $n$ is even it consists of elements of orders $2n$ and
$\infty$.
\end{proposition}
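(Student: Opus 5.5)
We prove the two claims separately; both are direct matrix computations.

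For $b_n$, we have $b_n=I_n+E_{2,1}$ and $E_{2,1}^2=0$. By induction on $k$, using $E_{2,1}^2=0$, we get $b_n^k=I_n+kE_{2,1}$ for all $k\in\ZZ$; negative $k$ follows from $b_n^{-1}=I_n-E_{2,1}$. Hence $b_n^k\neq I_n$ for every $k\neq 0$, so $b_n$ has infinite order.

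For $a_n$, we read off its action on the standard basis $e_1,\dots,e_n$ of column vectors. The $j$-th column of $a_n$ is the image of $e_j$. Column $1$ has its only nonzero entry $\eps_n$ in row $n$, so $a_ne_1=\eps_n e_n$. For $j\ge 2$, column $j$ has its only nonzero entry $1$ in row $j-1$, so $a_ne_j=e_{j-1}$. Thus $a_n$ acts by
\[
e_n\mapsto e_{n-1}\mapsto\cdots\mapsto e_1\mapsto \eps_n e_n .
\]
Following a single basis vector $e_j$ around this cycle, it returns to itself after $n$ steps, having passed through the wrap-around $e_1\mapsto\eps_ne_n$ exactly once. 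Hence $a_n^ne_j=\eps_ne_j$ for every $j$, that is, $a_n^n=\eps_nI_n$.

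For $0<k<n$, the vector $a_n^ke_j$ is $\pm e_{j'}$ with $j'\equiv j-k\pmod n$ and $j'\neq j$. So $a_n^k$ is not a scalar matrix, and in particular $a_n^k\neq\pm I_n$.

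If $n$ is odd, then $\eps_n=1$, so $a_n^n=I_n$. Since $a_n^k\neq I_n$ for $0<k<n$, the order of $a_n$ is exactly $n$.

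If $n$ is even, then $\eps_n=-1$, so $a_n^n=-I_n\neq I_n$ and $a_n^{2n}=I_n$. Suppose $a_n^m=I_n$ with $0<m<2n$. Writing $m=qn+r$ with $0\le r<n$ gives $a_n^m=(-1)^qa_n^r$. For this to equal $I_n$ we need $a_n^r=\pm I_n$, which forces $r=0$, and then $m=n$. But $a_n^n=-I_n$, a contradiction. Hence the order of $a_n$ is exactly $2n$.

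There is no real obstacle here. The only point needing care is to track the single sign $\eps_n$ on the wrap-around entry correctly.
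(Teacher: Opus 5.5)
Your proof is correct and follows essentially the same route as the paper: $b_n^k=I_n+kE_{2,1}$ gives infinite order, and the basis action with exactly one wrap-around gives $a_n^n=\eps_n I_n$, while $a_n^k$ is not $\pm I_n$ for $0<k<n$ because it moves every basis line. Your division-with-remainder argument in the even case spells out the paper's remark that the underlying cyclic permutation forces $n\mid m$, after which $m=n$ is excluded because $a_n^n=-I_n$.
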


\begin{proof}
Since $b_n=I_n+E_{2,1}$ and $E_{2,1}^2=0$, one has
\[
b_n^m=(I_n+E_{2,1})^m=I_n+mE_{2,1}
\]
for every integer $m$.  Hence $b_n^m=I_n$ only when $m=0$, so $b_n$ has infinite order.

Now consider $a_n$.  Its action on the standard basis is
\[
a_ne_1=\eps_n e_n,\qquad a_ne_j=e_{j-1}\quad (2\le j\le n).
\]
Applying this $n$ times sends each basis vector back to itself, and exactly one wrap-around
occurs.  Therefore
\[
a_n^n=\eps_n I_n.
\]
If $n$ is odd, then $\eps_n=1$, so $a_n^n=I_n$.  No smaller positive power can be the
identity, because $a_n$ permutes the basis cyclically and a proper positive divisor of $n$
does not return every basis vector to itself.  Hence $a_n$ has order $n$.

If $n$ is even, then $\eps_n=-1$, so $a_n^n=-I_n$, and therefore $a_n^{2n}=I_n$.  Again,
no smaller positive power can be the identity. Indeed, if $a_n^m=I_n$, then the
underlying cyclic permutation of the basis forces $n\mid m$. For $m<n$ the basis vectors are
not returned to their original positions, while for $m=n$ one gets $-I_n$, not $I_n$.
Hence the next possible value is $m=2n$, and therefore $a_n$ has order $2n$.
\end{proof}

\subsection{Conjugating the basic transvection}

Let $e_1,\dots,e_n$ be the standard basis of $\ZZ^n$.  By definition,
\[
ae_1=\eps_n e_n,\qquad ae_j=e_{j-1}\quad (2\le j\le n).
\]
Equivalently,
\[
a^{-1}e_n=\eps_n e_1,\qquad a^{-1}e_j=e_{j+1}\quad (1\le j\le n-1).
\]
Recall that a monomial matrix is a square matrix with exactly one nonzero entry in each
row and exactly one nonzero entry in each column. Equivalently, it is a permutation
matrix with nonzero scalars replacing the \(1\)'s. Thus a monomial matrix \(g\) acts on
the standard basis by \(g e_r=\lambda_r e_{\pi(r)}\), where \(\pi\) is a permutation and
each \(\lambda_r\) is nonzero.

\begin{lemma}\label{lem:shift}
For $0\le k\le n-2$ one has
\[
a^{-k}E_{2,1}a^k=E_{k+2,k+1},
\qquad\text{hence}\qquad
a^{-k}ba^k=I_n+E_{k+2,k+1}=T_{k+2,k+1}.
\]
Moreover
\[
a^{-(n-1)}E_{2,1}a^{n-1}=\eps_n E_{1n},
\qquad\text{hence}\qquad
a^{-(n-1)}ba^{n-1}=I_n+\eps_n E_{1n}.
\]
\end{lemma}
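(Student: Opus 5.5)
The plan is to compute the conjugation action of $a$ on matrix units using the basis description of $a$ and $a^{-1}$ recorded just before the statement. For any invertible $g$ one has $g^{-1}E_{r,s}g = (g^{-1}e_r)(e_s^{\top}g)$, since $E_{r,s}=e_re_s^{\top}$. I will apply this with $g=a^{k}$, so the two factors to track are the column vector $a^{-k}e_r$ and the row vector $e_s^{\top}a^{k}$.

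First, the row action. From $ae_j=e_{j-1}$ for $j\ge 2$ and $ae_1=\eps_n e_n$, the entry $(a)_{s,t}$ is nonzero only when $t=s+1$ (value $1$, for $s\le n-1$) or $(s,t)=(n,1)$ (value $\eps_n$). Hence $e_s^{\top}a=e_{s+1}^{\top}$ for $1\le s\le n-1$, and $e_n^{\top}a=\eps_n e_1^{\top}$. Similarly, $a^{-1}e_r=e_{r+1}$ for $r\le n-1$, and $a^{-1}e_n=\eps_n e_1$.

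Second, I use a single-step induction on $k$. If $a^{-k}E_{2,1}a^{k}=E_{k+2,k+1}$ with $k+2\le n$, then
\[
a^{-(k+1)}E_{2,1}a^{k+1}=a^{-1}E_{k+2,k+1}a=(a^{-1}e_{k+2})(e_{k+1}^{\top}a).
\]
As long as $k+2\le n-1$, no wrap-around occurs: $a^{-1}e_{k+2}=e_{k+3}$ and $e_{k+1}^{\top}a=e_{k+2}^{\top}$. This gives $E_{k+3,k+2}$, which proves the first claim for $0\le k\le n-2$, starting from the trivial case $k=0$. At the last step, from $k=n-2$ (that is, $E_{n,n-1}$) to $k=n-1$, the column factor wraps: $a^{-1}e_n=\eps_n e_1$. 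The row factor does not, since $e_{n-1}^{\top}a=e_n^{\top}$. The result is $\eps_n E_{1,n}$.

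Finally, the transvection statements follow by linearity of conjugation: $a^{-k}(I_n+E_{2,1})a^{k}=I_n+a^{-k}E_{2,1}a^{k}$. The only delicate point is bookkeeping: the row and column wrap-arounds must be tracked separately, and exactly one $\eps_n$ appears at $k=n-1$. I do not expect any real obstacle beyond checking these index boundaries.
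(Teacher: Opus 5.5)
Your proof is correct and is essentially the paper's argument. The factorization $g^{-1}E_{r,s}g=(g^{-1}e_r)(e_s^{\top}g)$ is the same computation as the paper's monomial-conjugation formula $g^{-1}E_{ij}g=\lambda_{\pi^{-1}(j)}\lambda_{\pi^{-1}(i)}^{-1}E_{\pi^{-1}(i),\pi^{-1}(j)}$, and your boundary checks (no wrap-around for $k\le n-2$, a single column wrap $a^{-1}e_n=\eps_n e_1$ at $k=n-1$) are right; the only difference is that the paper proves the formula once for an arbitrary monomial $g$ and applies it directly to $g=a^k$ (reusing it in Lemma~\ref{lem:shift-general}), whereas you iterate conjugation by $a$ one step at a time.
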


\begin{proof}
We first note the following elementary observation. Suppose \(g\) is a monomial
matrix and
\[
g e_r=\lambda_r e_{\pi(r)}
\qquad (1\le r\le n),
\]
where \(\pi\) is a permutation of \(\{1,\dots,n\}\) and each \(\lambda_r\) is a nonzero
scalar. Then
\[
g^{-1}E_{ij}g
=
\lambda_{\pi^{-1}(j)}\lambda_{\pi^{-1}(i)}^{-1}
E_{\pi^{-1}(i),\pi^{-1}(j)}.
\]
Indeed, it is enough to check both sides on the standard basis. For a basis vector
\(e_r\),
\[
g^{-1}E_{ij}g e_r
=
g^{-1}E_{ij}(\lambda_r e_{\pi(r)}).
\]
This is zero unless \(\pi(r)=j\), i.e. unless \(r=\pi^{-1}(j)\). In that exceptional case,
\[
g^{-1}E_{ij}g e_{\pi^{-1}(j)}
=
g^{-1}\bigl(\lambda_{\pi^{-1}(j)} e_i\bigr).
\]
Since
\[
g e_{\pi^{-1}(i)}
=
\lambda_{\pi^{-1}(i)}e_i,
\]
we have
\[
g^{-1}e_i
=
\lambda_{\pi^{-1}(i)}^{-1}e_{\pi^{-1}(i)}.
\]
Therefore
\[
g^{-1}E_{ij}g e_{\pi^{-1}(j)}
=
\lambda_{\pi^{-1}(j)}\lambda_{\pi^{-1}(i)}^{-1}
e_{\pi^{-1}(i)}.
\]
This is exactly the action of
\[
\lambda_{\pi^{-1}(j)}\lambda_{\pi^{-1}(i)}^{-1}
E_{\pi^{-1}(i),\pi^{-1}(j)}.
\]

Apply this with \(g=a^k\). For \(0\le k\le n-2\), one has
\[
a^ke_{k+1}=e_1,
\qquad
a^ke_{k+2}=e_2,
\]
and both relevant scalar coefficients are equal to \(1\). Hence
\[
a^{-k}E_{2,1}a^k=E_{k+2,k+1}.
\]
For \(k=n-1\), one has
\[
a^{n-1}e_n=e_1,
\qquad
a^{n-1}e_1=\eps_n e_2.
\]
Thus the same formula gives
\[
a^{-(n-1)}E_{2,1}a^{n-1}=\eps_n E_{1n}.
\]
The formulas for \(b=I_n+E_{2,1}\) follow immediately.
\end{proof}

The next lemma is used only in situations where the indicated shift does not wrap around
the cycle.

\begin{lemma}\label{lem:shift-general}
Let $1\le p\neq q\le n$, and let $k\ge 0$. Assume that
\[
p+k\le n,\qquad q+k\le n.
\]
Then
\[
a^{-k}E_{p,q}a^k=E_{p+k,q+k},
\qquad\text{hence}\qquad
a^{-k}T_{p,q}a^k=T_{p+k,q+k}.
\]
\end{lemma}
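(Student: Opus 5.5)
The plan is to reuse the monomial-conjugation formula established in the proof of Lemma~\ref{lem:shift}. For a monomial matrix $g$ with $ge_r=\lambda_r e_{\pi(r)}$, that formula gives
\[
g^{-1}E_{ij}g=\lambda_{\pi^{-1}(j)}\lambda_{\pi^{-1}(i)}^{-1}E_{\pi^{-1}(i),\pi^{-1}(j)}.
\]
We apply it with $g=a^k$. The work is to identify $\pi^{-1}$ and the scalars $\lambda$ on the two relevant indices $p+k$ and $q+k$, and to check that no wrap-around sign appears.

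The key step is to compute $a^k e_r$ for $r>k$. From $ae_j=e_{j-1}$ for $2\le j\le n$, induction on $k$ gives $a^k e_r=e_{r-k}$ whenever $r-k\ge 1$. The reason is that each of the $k$ applications of $a$ acts on some $e_j$ with $j\ge r-k+1\ge 2$, so the wrap-around case $ae_1=\eps_n e_n$ never occurs. Since $p\ge 1$ and $p+k\le n$, taking $r=p+k$ gives $a^ke_{p+k}=e_p$ with scalar $1$. Likewise $a^ke_{q+k}=e_q$ with scalar $1$. Hence $\pi^{-1}(p)=p+k$ and $\pi^{-1}(q)=q+k$, and the relevant $\lambda$'s both equal $1$.

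Substituting into the formula yields $a^{-k}E_{p,q}a^k=E_{p+k,q+k}$. Conjugation is a ring automorphism fixing $I_n$, so $a^{-k}T_{p,q}a^k=I_n+E_{p+k,q+k}=T_{p+k,q+k}$. Here $p+k\ne q+k$ because $p\ne q$, so the right-hand side is a genuine elementary transvection.

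The only point needing care is the no-wrap-around induction, which is where the hypotheses $p+k\le n$ and $q+k\le n$ are used. As an alternative check, we could verify $E_{p,q}a^k=a^kE_{p+k,q+k}$ directly on basis vectors. Both sides kill $e_r$ for $r\ne q+k$ and send $e_{q+k}$ to $e_p$.
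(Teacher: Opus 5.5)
Your proof is correct and matches the paper's argument: apply the monomial-conjugation formula from Lemma~\ref{lem:shift} with $g=a^k$, and observe that $a^ke_{p+k}=e_p$ and $a^ke_{q+k}=e_q$ with both scalars equal to $1$. Your short induction showing that the wrap-around case $ae_1=\eps_n e_n$ never occurs fills in a step the paper only asserts.
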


\begin{proof}
We use the monomial-conjugation formula established in the proof of
Lemma~\ref{lem:shift}. Since no wrap-around occurs under the stated hypotheses, one has
\[
a^k e_{p+k}=e_p,\qquad a^k e_{q+k}=e_q,
\]
and both relevant coefficients are equal to $1$. Therefore
\[
a^{-k}E_{p,q}a^k=E_{p+k,q+k}.
\]
The corresponding identity for transvections follows immediately from
$T_{p,q}=I_n+E_{p,q}$.
\end{proof}

\subsection{Recursive words for all transvections}

We now define words $\tau_{ij}(a,b)$ recursively.

\medskip
\noindent
\textbf{Step 1: the adjacent lower transvections.}
For $1\le r\le n-1$, define
\[
\tau_{r+1,r}:=a^{-(r-1)}ba^{r-1}.
\]
By Lemma~\ref{lem:shift}, this is exactly $T_{r+1,r}$.

Also define
\[
\tau_{1n}:=
\bigl(a^{-(n-1)}ba^{n-1}\bigr)^{\eps_n}.
\]
Since $\eps_n=\pm 1$, this means that
\[
\tau_{1n}=
\begin{cases}
a^{-(n-1)}ba^{n-1}, & \text{if }n\text{ is odd},\\[4pt]
\bigl(a^{-(n-1)}ba^{n-1}\bigr)^{-1}, & \text{if }n\text{ is even}.
\end{cases}
\]
Hence $\tau_{1n}=T_{1n}$.

\medskip
\noindent
\textbf{Step 2: all lower transvections.}
For each distance $d=2,\dots,n-1$, define all words $\tau_{ij}$ with $i-j=d$ by
\[
\tau_{ij}:=\comm{\tau_{i,j+1}}{\tau_{j+1,j}},
\]
so that the right-hand side involves only lower transvections of smaller distance.
Since in $\SL_n(\ZZ)$ one has
\[
[T_{i,j+1},T_{j+1,j}]=T_{ij},
\]
it follows inductively that $\tau_{ij}=T_{ij}$ for every $i>j$.

\medskip
\noindent
\textbf{Step 3: the last-column upper transvections.}
For $2\le i\le n-1$, define
\[
\tau_{in}:=\comm{\tau_{i1}}{\tau_{1n}}.
\]
Then
\[
\tau_{in}=[T_{i1},T_{1n}]=T_{in}.
\]
We already know $\tau_{1n}=T_{1n}$ from Step~1.

\medskip
\noindent
\textbf{Step 4: all remaining upper transvections.}
For $1\le i<j\le n-1$, define
\[
\tau_{ij}:=\comm{\tau_{in}}{\tau_{nj}}.
\]
Here $\tau_{1n}$ is the word already defined in Step~1.
Since $\tau_{in}=T_{in}$ and $\tau_{nj}=T_{nj}$, we get
\[
\tau_{ij}=[T_{in},T_{nj}]=T_{ij}.
\]

We have proved:

\begin{theorem}\label{thm:all-transvections}
For every pair of distinct indices $i,j\in\{1,\dots,n\}$, the recursively defined word
$\tau_{ij}(a,b)$ is exactly the elementary transvection $T_{ij}$.  In particular,
$a,b$ generate $\SL_n(\ZZ)$.
\end{theorem}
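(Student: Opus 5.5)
The proof is an induction that follows the order of Steps~1--4. At each step I will check that the words on the right-hand side have already been shown to equal the expected transvections. I will then apply Lemma~\ref{lem:elementary-identities}(ii), after checking that the three indices involved are distinct.

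\textbf{Step~1} is immediate from Lemma~\ref{lem:shift}. With $k=r-1\in\{0,\dots,n-2\}$ we get $\tau_{r+1,r}=T_{r+1,r}$ for $1\le r\le n-1$. We also have $a^{-(n-1)}ba^{n-1}=I_n+\eps_nE_{1n}$. For $\eps_n=1$ this is $T_{1n}$. For $\eps_n=-1$ it is $I_n-E_{1n}$, whose inverse is $I_n+E_{1n}=T_{1n}$ because $E_{1n}^2=0$. Hence $\tau_{1n}=T_{1n}$ in both cases.

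\textbf{Step~2} is an induction on the distance $d=i-j\ge 2$. The word $\tau_{i,j+1}$ has distance $d-1\ge 1$, so it equals $T_{i,j+1}$ by Step~1 or by the inductive hypothesis. The word $\tau_{j+1,j}$ equals $T_{j+1,j}$ by Step~1. Since $i>j+1>j$, the indices $i,j+1,j$ are distinct, and Lemma~\ref{lem:elementary-identities}(ii) gives $[T_{i,j+1},T_{j+1,j}]=T_{ij}$. One must also check that every index stays in range: $j+1\le i-1\le n-1$.

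\textbf{Step~3}: for $2\le i\le n-1$ the indices $i,1,n$ are distinct. By Step~2 we have $\tau_{i1}=T_{i1}$, and by Step~1 $\tau_{1n}=T_{1n}$, so the lemma gives $\tau_{in}=T_{in}$.

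\textbf{Step~4}: for $1\le i<j\le n-1$, the word $\tau_{in}$ is known from Steps~1 and~3, and $\tau_{nj}$ is a lower transvection by Step~2. The indices $i,n,j$ are distinct, so $[T_{in},T_{nj}]=T_{ij}$.

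These four steps exhaust all ordered pairs $i\neq j$. Step~2 covers every $i>j$ once Step~1 has handled distance $1$. Steps~1 and~3 cover the pairs $(i,n)$ with $i<n$, and Step~4 covers the remaining pairs with $i<j<n$.

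For generation, every $T_{ij}$ lies in $\langle a,b\rangle$. The elementary transvections generate $\SL_n(\ZZ)$ by the Steinberg presentation~\eqref{eq:steinberg}, or by the standard Euclidean-algorithm row reduction over $\ZZ$. Since $a,b\in\SL_n(\ZZ)$ by Lemma~\ref{lem:det}, it follows that $\langle a,b\rangle=\SL_n(\ZZ)$.

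The only delicate point is bookkeeping, not mathematics. One must confirm that the recursion is well founded, meaning no word is used before it is defined; in particular $\tau_{1n}$ comes from Step~1 and not from Step~3. One must also handle the sign correction in $\tau_{1n}$ correctly for even $n$.
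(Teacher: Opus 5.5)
Your proof is correct and follows the paper's own argument: you verify Steps~1--4 in order, using Lemma~\ref{lem:shift} for the base words and the commutator identity of Lemma~\ref{lem:elementary-identities}(ii) for the rest, and then conclude generation because the transvections generate $\SL_n(\ZZ)$. Your added bookkeeping makes explicit what the paper's one-line proof leaves implicit, namely the sign inversion for $\tau_{1n}$ when $n$ is even, the distinctness of indices at each step, and the check that every ordered pair $i\neq j$ is covered.
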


\begin{proof}
Each step was justified directly by a Steinberg commutator identity.  Since the
transvections generate $\SL_n(\ZZ)$, the last statement follows.
\end{proof}

\subsection{Expressing \texorpdfstring{$a_n$}{a n} back in terms of transvections}\label{sec:presentations}

To apply Proposition~\ref{prop:tietze-template}, it remains to express the generators
$a_n$ and $b_n$ back in terms of the elementary transvections.

Here and throughout, matrices act on column vectors on the left.

For $1\le r\le n-1$, define
\[
\sigma_r:=T_{r,r+1}T_{r+1,r}^{-1}T_{r,r+1}.
\]
On the $(r,r+1)$-block, this is the matrix
\[
\begin{pmatrix}0&1\\ -1&0\end{pmatrix},
\]
and it acts as the identity on all other basis vectors.

\begin{lemma}\label{lem:a-product}
One has
\[
a_n=\sigma_{n-1}\sigma_{n-2}\cdots \sigma_1.
\]
\end{lemma}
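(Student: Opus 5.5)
The plan is to compare both sides of the identity column by column, i.e. on the standard basis \(e_1,\dots,e_n\), since a matrix is determined by its action on a basis. The action of \(a_n\) is already recorded in the paper:
\[
a_ne_1=\eps_n e_n,\qquad a_ne_j=e_{j-1}\quad(2\le j\le n).
\]
So it suffices to show that \(P:=\sigma_{n-1}\sigma_{n-2}\cdots\sigma_1\) acts in exactly the same way. Since matrices act on column vectors on the left, \(P\) applied to a vector means applying \(\sigma_1\) first, then \(\sigma_2\), and so on up to \(\sigma_{n-1}\).

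The first step is to confirm the stated block form of \(\sigma_r\). All three factors act as the identity outside \(\operatorname{span}(e_r,e_{r+1})\), so this is a \(2\times 2\) computation:
\[
\begin{pmatrix}1&1\\0&1\end{pmatrix}\begin{pmatrix}1&0\\-1&1\end{pmatrix}\begin{pmatrix}1&1\\0&1\end{pmatrix}=\begin{pmatrix}0&1\\-1&0\end{pmatrix}.
\]
Read column by column, this gives
\[
\sigma_re_r=-e_{r+1},\qquad \sigma_re_{r+1}=e_r,\qquad \sigma_re_s=e_s\quad(s\ne r,r+1).
\]

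Next I treat \(e_j\) with \(2\le j\le n\). The factors \(\sigma_1,\dots,\sigma_{j-2}\) only move \(e_1,\dots,e_{j-1}\), so they fix \(e_j\). Then \(\sigma_{j-1}\) sends \(e_j\) to \(e_{j-1}\). The remaining factors \(\sigma_j,\dots,\sigma_{n-1}\) only move \(e_j,\dots,e_n\), so they fix \(e_{j-1}\). Hence \(Pe_j=e_{j-1}\), which agrees with \(a_ne_j\).

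The only step needing care is the wrap-around vector \(e_1\), where the sign has to be tracked. I would prove by induction on \(r\) that
\[
\sigma_r\cdots\sigma_1e_1=(-1)^re_{r+1}.
\]
The case \(r=1\) is \(\sigma_1e_1=-e_2\). For the inductive step, \(\sigma_{r+1}\) sends \(e_{r+1}\) to \(-e_{r+2}\), which multiplies the sign by \(-1\). Taking \(r=n-1\) gives
\[
Pe_1=(-1)^{n-1}e_n=\eps_n e_n=a_ne_1.
\]
This is exactly where the sign correction \(\eps_n\) in \eqref{eq:anbn} is needed. With this, \(P\) and \(a_n\) agree on every basis vector, so \(a_n=\sigma_{n-1}\sigma_{n-2}\cdots\sigma_1\).
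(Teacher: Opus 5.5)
Your proof is correct and follows essentially the same route as the paper: both compare the actions of $\sigma_{n-1}\cdots\sigma_1$ and $a_n$ on the standard basis, tracking the sign $(-1)^{n-1}=\eps_n$ on $e_1$. The paper runs a single induction on the partial products $\sigma_m\cdots\sigma_1$ for all basis vectors at once. You instead treat $e_j$ ($j\ge 2$) directly and induct only for $e_1$, and you explicitly verify the $2\times 2$ block form of $\sigma_r$ that the paper merely asserts; these are minor differences in bookkeeping.
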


\begin{proof}
Each $\sigma_r$ acts on the standard basis by
\[
\sigma_r e_r=-e_{r+1},\qquad \sigma_r e_{r+1}=e_r,\qquad
\sigma_r e_s=e_s\quad (s\neq r,r+1).
\]
We claim that for each $m$ with $1\le m\le n-1$,
\[
(\sigma_m\cdots \sigma_1)e_1=(-1)^m e_{m+1},
\]
\[
(\sigma_m\cdots \sigma_1)e_j=e_{j-1}\qquad (2\le j\le m+1),
\]
and
\[
(\sigma_m\cdots \sigma_1)e_j=e_j\qquad (j\ge m+2).
\]
This is proved by induction on $m$. The case $m=1$ is immediate. For the induction
step, note that $\sigma_{m+1}$ acts nontrivially only on $e_{m+1}$ and $e_{m+2}$, so
the claimed formulas transform exactly as required.

Taking $m=n-1$, one obtains
\[
(\sigma_{n-1}\cdots \sigma_1)e_1=(-1)^{n-1}e_n=\eps_n e_n,
\]
and
\[
(\sigma_{n-1}\cdots \sigma_1)e_j=e_{j-1}\qquad (2\le j\le n).
\]
This is precisely the action of $a_n$ on the standard basis, so the two matrices are equal.
\end{proof}

By Theorem~\ref{thm:all-transvections}, each $T_{i,j}$ may be replaced by the word
$\tau_{i,j}(a,b)$. Lemma~\ref{lem:a-product} therefore yields the explicit identity
\begin{equation}\label{eq:a-word}
a=
\bigl(\tau_{n-1,n}\tau_{n,n-1}^{-1}\tau_{n-1,n}\bigr)
\cdots
\bigl(\tau_{1,2}\tau_{2,1}^{-1}\tau_{1,2}\bigr).
\end{equation}
Also, by construction,
\[
b=\tau_{2,1}.
\]

We may now combine the Steinberg presentation with Proposition~\ref{prop:tietze-template}.

\begin{theorem}\label{thm:uniform-presentation}
For every $n\ge 3$, the group $\SL_n(\ZZ)$ admits the finite $2$-generator presentation
\[
\SL_n(\ZZ)\cong \langle a,b\mid \mathcal R_n(a,b)\rangle,
\]
where $\mathcal R_n(a,b)$ consists of the following relations:
\begin{align*}
[\tau_{i,j}(a,b),\tau_{k,l}(a,b)]&=1
&&\bigl((i,j)\neq (k,l),\ i\neq l,\ j\neq k\bigr),\\
[\tau_{i,j}(a,b),\tau_{j,k}(a,b)]&=\tau_{i,k}(a,b)
&&\text{for distinct }i,j,k,\\
\bigl(\tau_{1,2}(a,b)\tau_{2,1}(a,b)^{-1}\tau_{1,2}(a,b)\bigr)^4
&=1,\\
a&=\bigl(\tau_{n-1,n}\tau_{n,n-1}^{-1}\tau_{n-1,n}\bigr)\cdots{}\\
&\qquad\bigl(\tau_{1,2}\tau_{2,1}^{-1}\tau_{1,2}\bigr).
\end{align*}
The relation $b=\tau_{2,1}(a,b)$ is omitted because it is tautological. The first family
of commutativity relations is listed without identifying inverse-duplicate commutativity relators; in Section~\ref{sec:quotients} we keep
only one orientation from each inverse pair when counting relators.
\end{theorem}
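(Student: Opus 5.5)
The plan is a direct application of Proposition~\ref{prop:tietze-template}. We take the old generators $x_i$ to be the elementary transvections $T_{i,j}$ ($i\neq j$), so there are $m=n(n-1)$ of them, and we take the relators $R_1,\dots,R_s$ to be those of the Steinberg presentation \eqref{eq:steinberg}. The new generators are $a=a_n$ and $b=b_n$. The hypotheses of the proposition are then verified one at a time.

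First, by Lemma~\ref{lem:det}, $a$ and $b$ lie in $\SL_n(\ZZ)$. By Theorem~\ref{thm:all-transvections}, each $T_{i,j}$ equals the explicit word $\tau_{i,j}(a,b)$ in $\SL_n(\ZZ)$, so $a$ and $b$ generate. This supplies the words $w_i=\tau_{i,j}$.

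Second, we need the backward words $u$ and $v$. For $v$ we simply take $v=T_{2,1}$, since $b=I_n+E_{2,1}$. For $u$ we invoke Lemma~\ref{lem:a-product}, which gives $u=\prod_{r=n-1}^{1}T_{r,r+1}T_{r+1,r}^{-1}T_{r,r+1}$.

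Proposition~\ref{prop:tietze-template} then yields a presentation on $a,b$. Its relators are the Steinberg relators with each $T_{i,j}$ replaced by $\tau_{i,j}(a,b)$, together with $a=u(\tau)$, which is exactly \eqref{eq:a-word}, and $b=v(\tau)=\tau_{2,1}(a,b)$.

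Next we reconcile this with the displayed list $\mathcal R_n(a,b)$.

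\begin{itemize}
\item \emph{The relation $b=\tau_{2,1}(a,b)$.} By Step~1 of the recursive construction, $\tau_{2,1}=a^{0}ba^{0}=b$ as a word. Strictly, $a^{-0}ba^{0}$ should be read as the word $b$. So the relator $b\,\tau_{2,1}^{-1}$ freely reduces to the empty word, and dropping it does not change the group.
\item \emph{The commutativity relations.} The Steinberg family $[T_{i,j},T_{k,l}]=1$ for $i\neq l$, $j\neq k$ includes the case $(i,j)=(k,l)$, where the relator is trivial in the free group. Excluding it, as the theorem does, is therefore harmless.
\item \emph{Duplicate orientations.} Keeping both $[x,y]$ and $[y,x]$ only adds redundant relators, since each is the inverse of the other. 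This is why the theorem lists them without identifying the inverse pairs.
\item \emph{The remaining relations.} The second and third families are verbatim substitutions of the Steinberg relations.
\end{itemize}

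There is no serious obstacle, since all the mathematical content is already in Theorem~\ref{thm:all-transvections}, Lemma~\ref{lem:a-product} and the Steinberg presentation. The only points needing care are bookkeeping. One is confirming that the word identity $\tau_{2,1}=b$ holds literally, and not merely in the group, so that the omitted relator is genuinely tautological. If it held only in the group, it would still be a consequence of the others only after an argument, and the safest remedy would be to keep it. The other is checking that the Steinberg presentation is valid for all $n\ge3$ with the stated relation set, which we take from \cite{Milnor,Conder}. With these checked, the presented group is isomorphic to $\SL_n(\ZZ)$.
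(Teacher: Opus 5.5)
Your proposal is correct and follows essentially the same route as the paper: apply Proposition~\ref{prop:tietze-template} to the Steinberg presentation with $w_{i,j}=\tau_{i,j}$, $u=\sigma_{n-1}\cdots\sigma_1$ from Lemma~\ref{lem:a-product}, and $v=T_{2,1}$, then drop the tautological relator because $\tau_{2,1}$ is literally the word $b$. Your extra bookkeeping, namely discarding the trivial $(i,j)=(k,l)$ commutators and noting that the two orientations are mutually inverse, is consistent with the paper's proof and slightly more explicit.
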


\begin{proof}
Apply Proposition~\ref{prop:tietze-template} to the Steinberg presentation
\eqref{eq:steinberg}, taking the generators $x_{i,j}$ to be the transvections $T_{i,j}$,
the words $w_{i,j}(a,b)$ to be $\tau_{i,j}(a,b)$, the word $u$ to be
$\sigma_{n-1}\cdots \sigma_1$, and the word $v$ to be $T_{2,1}$. The required
hypotheses hold by Theorem~\ref{thm:all-transvections}, Lemma~\ref{lem:a-product},
and the identity $b=\tau_{2,1}$. After substitution, the $v$-relation reads
$b=\tau_{2,1}(a,b)=b$, so it may be omitted. The displayed presentation is exactly the
non-tautological part of the presentation produced by Proposition~\ref{prop:tietze-template}.
\end{proof}

\begin{remark}
The formal passage from the transvection presentation to a two-generator
presentation is precisely the standard Tietze-elimination step recalled in
Proposition~\ref{prop:tietze-template}. The new ingredient in
Theorem~\ref{thm:uniform-presentation} is the explicit all-rank construction
of the words \(\tau_{ij}(a,b)\) for the particular monomial/transvection pair
\((a_n,b_n)\).
\end{remark}

\begin{remark}
The presentation of Theorem~\ref{thm:uniform-presentation} is not intended to minimize length.
Its main feature is that it is completely explicit, uniform in $n$, and obtained directly
from the Steinberg presentation by a transparent elimination procedure.
\end{remark}

\begin{remark}\label{rem:quantitative-preview}
The recursive words $\tau_{ij}(a,b)$ used above are completely explicit, but they are
unbalanced. In Section~\ref{sec:quantitative} we replace them by a balanced
family of words representing the same transvections and obtain polynomial bounds for the
word lengths and for the size of the resulting $2$-generator presentation.
\end{remark}

\section{Quantitative complexity of the transvection words}\label{sec:quantitative}

In this section we rebalance the construction and obtain
polynomial bounds for the lengths of the transvection words, and hence for the size of
the resulting $2$-generator presentation.

Throughout, for a word $w$ in the alphabet $\{a^{\pm1},b^{\pm1}\}$, we write $\ell(w)$
for its freely reduced length.

\subsection{Balanced words for the first-column transvections}

For $2\le r\le n$, we define words $\omega_r(a,b)$ representing the transvections
$T_{r,1}$.

Set
\[
\omega_2:=b.
\]
For $3\le r\le n$, define
\[
m_r:=\Bigl\lfloor \frac r2\Bigr\rfloor+1,
\qquad
d_r:=r-m_r+1=\Bigl\lceil \frac r2\Bigr\rceil,
\]
and then set recursively
\[
\omega_r:=
\comm{\,a^{-(m_r-1)}\omega_{d_r}a^{m_r-1}\,}{\omega_{m_r}}.
\]

\begin{lemma}\label{lem:balanced-first-column}
For every $r$ with $2\le r\le n$, one has
\[
\omega_r(a,b)=T_{r,1}.
\]
\end{lemma}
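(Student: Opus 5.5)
Strong induction on $r$, the case $r=2$ being $\omega_2=b=T_{2,1}$ by Lemma~\ref{lem:shift} with $k=0$. For $r\ge3$ we first check that the recursion only calls smaller indices inside the admissible range. Since $m_r=\lfloor r/2\rfloor+1$ and $d_r=\lceil r/2\rceil$, we have $2\le m_r<r$ and $2\le d_r<r$ for $r\ge3$, and $m_r+d_r-1=r$. So the induction hypothesis gives $\omega_{d_r}=T_{d_r,1}$ and $\omega_{m_r}=T_{m_r,1}$.

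Next we identify the conjugated factor. Apply Lemma~\ref{lem:shift-general} with $(p,q)=(d_r,1)$ and $k=m_r-1\ge0$. The no-wrap hypotheses are
\[
d_r+m_r-1=r\le n,\qquad 1+m_r-1=m_r\le n,
\]
and both hold. The lemma then gives
\[
a^{-(m_r-1)}\omega_{d_r}a^{m_r-1}=a^{-(m_r-1)}T_{d_r,1}a^{m_r-1}=T_{r,m_r}.
\]
This is the step that needs care: getting the index arithmetic right and confirming there is no wrap-around, since the sign $\eps_n$ would otherwise appear as in the last case of Lemma~\ref{lem:shift}.

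Finally,
\[
\omega_r=\comm{T_{r,m_r}}{T_{m_r,1}}=T_{r,1}
\]
by Lemma~\ref{lem:elementary-identities}(ii). This uses that $r,m_r,1$ are distinct, which holds because $2\le m_r<r$. This completes the induction.
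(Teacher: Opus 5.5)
Your proof is correct and follows the paper's argument exactly: strong induction on $r$, conjugation without wrap-around via Lemma~\ref{lem:shift-general} to get $T_{r,m_r}$, and then the Steinberg commutator identity $[T_{r,m_r},T_{m_r,1}]=T_{r,1}$. Your explicit checks of $d_r+m_r-1=r\le n$, $m_r\le n$, and the distinctness of $r,m_r,1$ simply spell out what the paper asserts more briefly.
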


\begin{proof}
We argue by induction on $r$.

For $r=2$, this is the definition $\omega_2=b=T_{2,1}$.

Now let $r\ge 3$ and assume inductively that $\omega_s=T_{s,1}$ for all $2\le s<r$.
By construction, $2\le d_r,m_r<r$, so the inductive hypothesis gives
\[
\omega_{d_r}=T_{d_r,1},
\qquad
\omega_{m_r}=T_{m_r,1}.
\]
Since
\[
d_r+(m_r-1)=r,
\]
and no wrap-around occurs in this conjugation, Lemma~\ref{lem:shift-general} gives
\[
a^{-(m_r-1)}\omega_{d_r}a^{m_r-1}
=
a^{-(m_r-1)}T_{d_r,1}a^{m_r-1}
=
T_{r,m_r}.
\]
Therefore
\[
\omega_r
=
[T_{r,m_r},T_{m_r,1}]
=
T_{r,1}
\]
by the Steinberg commutator relation.
\end{proof}

\subsection{Length bounds for the first-column words}

Let
\[
L_r:=\ell(\omega_r)\qquad (2\le r\le n),
\]
and
\[
M_r:=\max_{2\le s\le r}L_s.
\]

\begin{lemma}\label{lem:first-column-length-recurrence}
For every $3\le r\le n$,
\[
L_r\le 2L_{d_r}+2L_{m_r}+4(m_r-1),
\]
and hence
\[
M_r\le 4M_{\lceil r/2\rceil+1}+2r.
\]
\end{lemma}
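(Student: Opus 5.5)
The first inequality follows from counting letters in the defining word. By definition,
\[
\omega_r=\comm{x}{y},\qquad x=a^{-(m_r-1)}\omega_{d_r}a^{m_r-1},\quad y=\omega_{m_r}.
\]
With the convention $\comm{x}{y}=x^{-1}y^{-1}xy$, the word is $x^{-1}y^{-1}xy$. Each of $x$ and $x^{-1}$ has unreduced length at most $L_{d_r}+2(m_r-1)$, and each of $y$ and $y^{-1}$ has length $L_{m_r}$. Since free reduction only shortens a word, we get
\[
L_r\le 2\bigl(L_{d_r}+2(m_r-1)\bigr)+2L_{m_r}=2L_{d_r}+2L_{m_r}+4(m_r-1),
\]
which is the first claim. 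No cancellation analysis is needed, because $\ell$ is the freely reduced length.

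For the second claim, I first compare the indices. Here $d_r=\lceil r/2\rceil$ and $m_r=\lfloor r/2\rfloor+1$, so both are at most $\lceil r/2\rceil+1$. If $r$ is even, $m_r=r/2+1=\lceil r/2\rceil+1$. If $r$ is odd, $m_r=\lceil r/2\rceil$. Both indices are also at least $2$, and for $r\ge3$ both are strictly less than $r$.

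Next, for each $s$ with $3\le s\le r$, I apply the first inequality to $s$:
\[
L_s\le 2L_{d_s}+2L_{m_s}+4(m_s-1)\le 4M_{\lceil s/2\rceil+1}+4\lfloor s/2\rfloor .
\]
Here I use $L_{d_s},L_{m_s}\le M_{\lceil s/2\rceil+1}$. This needs $\lceil s/2\rceil+1\le n$, which holds since $s\le r\le n$ and $n\ge3$. I also use $4(m_s-1)=4\lfloor s/2\rfloor\le 2s$.

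Then I use monotonicity in $s$. The quantity $M_{\lceil s/2\rceil+1}$ is nondecreasing in $s$, and $2s\le 2r$. Hence $L_s\le 4M_{\lceil r/2\rceil+1}+2r$ for every $s$ with $3\le s\le r$.

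For $s=2$ the bound is trivial: $L_2=1\le 2r$. Taking the maximum over $2\le s\le r$ then gives
\[
M_r\le 4M_{\lceil r/2\rceil+1}+2r .
\]

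The main point needing care is this index bookkeeping. One has to check that $\max(d_r,m_r)\le\lceil r/2\rceil+1$. One also has to justify passing from a bound on $L_r$ to a bound on the running maximum $M_r$; this is what requires the monotonicity of the right-hand side in $s$. The bound $4(m_r-1)\le 2r$ is what fixes the additive term $2r$.
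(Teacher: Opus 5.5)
Your proof is correct and follows the paper's argument. The first inequality comes from $\ell(\comm{u}{v})\le 2\ell(u)+2\ell(v)$, with the conjugation contributing $2(m_r-1)$; the bound on $M_r$ comes from applying that inequality at each $s\le r$, using $4(m_s-1)\le 2s$ and the monotonicity of $M_{\lceil s/2\rceil+1}$ in $s$. The only difference is cosmetic: you separate $s=2$ from $s\ge 3$, whereas the paper separates $s\le\lceil r/2\rceil+1$ from larger $s$.
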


\begin{proof}
The first inequality is immediate from the definition of $\omega_r$:
\[
\omega_r=[u,v]
\quad\text{with}\quad
u=a^{-(m_r-1)}\omega_{d_r}a^{m_r-1},
\quad
v=\omega_{m_r}.
\]
Since $\ell([u,v])\le 2\ell(u)+2\ell(v)$, we obtain
\[
L_r\le 2\bigl(L_{d_r}+2(m_r-1)\bigr)+2L_{m_r}
=2L_{d_r}+2L_{m_r}+4(m_r-1).
\]
Now $d_r=\lceil r/2\rceil$ and $m_r\le \lceil r/2\rceil+1$, while
\[
4(m_r-1)\le 2r.
\]
Hence
\[
L_r\le 4M_{\lceil r/2\rceil+1}+2r.
\]
For every $s\le r$, if $s\le \lceil r/2\rceil+1$, then
\[
L_s\le M_{\lceil r/2\rceil+1}\le 4M_{\lceil r/2\rceil+1}+2r.
\]
If $s>\lceil r/2\rceil+1$, then applying the preceding bound to $L_s$ gives
\[
L_s\le 4M_{\lceil s/2\rceil+1}+2s
\le 4M_{\lceil r/2\rceil+1}+2r.
\]
Taking the maximum over $2\le s\le r$ gives the asserted bound for $M_r$.
\end{proof}

\begin{proposition}\label{prop:first-column-quadratic}
There exists an absolute constant $C_0>0$ such that
\[
L_r\le C_0\,r^2
\qquad\text{for all }2\le r\le n.
\]
\end{proposition}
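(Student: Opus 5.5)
The plan is to solve the recurrence $M_r\le 4M_{\lceil r/2\rceil+1}+2r$ of Lemma~\ref{lem:first-column-length-recurrence} by induction on $r$. Since $L_r\le M_r$, a quadratic bound on $M_r$ gives the claim.

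\textbf{The obstacle.} A naive induction with the hypothesis $M_s\le Cs^2$ fails. Indeed $\lceil r/2\rceil+1\le (r+3)/2$, so the recurrence only yields
\[
M_r\le C(r+3)^2+2r,
\]
which exceeds $Cr^2$ by a linear term. Moreover, for $r=3$ the index $\lceil r/2\rceil+1=3$ is not smaller than $r$, so the recurrence is useless there. To handle both issues I will shift the variable and strengthen the induction hypothesis with lower-order terms.

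\textbf{The shift.} Put $x=r-3$. The map $r\mapsto (r+3)/2$ becomes $x\mapsto x/2$, because $(r+3)/2-3=(r-3)/2$. Write $s=\lceil r/2\rceil+1$ and $h(x)=Ax^2-2x-2$. I aim to prove
\[
M_r\le h(r-3)\qquad\text{for all } r\ge r_0,
\]
with $r_0$ a small fixed threshold (say $r_0=5$) and $A$ a large absolute constant.

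\textbf{Induction step.} Take $r>r_0$ large enough that $s<r$ and $s\ge r_0$; the finitely many $r$ where this fails are absorbed into the base cases. For such $r$:
\begin{enumerate}[label=(\alph*)]
\item $M$ is nondecreasing by definition.
\item $h$ is increasing on $x\ge 1$ once $A\ge 1$.
\item $s-3\le x/2$.
\end{enumerate}
Combining these with the induction hypothesis gives
\[
M_r\le 4h(x/2)+2r=Ax^2-4x-8+2x+6=Ax^2-2x-2=h(x).
\]
The coefficients $-2x-2$ were chosen precisely so that this inequality closes exactly.

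\textbf{Base cases.} For the finitely many small $r$ (all $r$ up to some absolute bound such as $8$, including $r=3$), $L_r$ is an explicit absolute constant computable from the definition of $\omega_r$. One has $L_2=1$, and $L_3\le 2L_2+2L_2+4=8$ directly from the first inequality of Lemma~\ref{lem:first-column-length-recurrence}, since $d_3=2$ and $m_3=2$. So one chooses $A$ large enough that $h(r-3)$ dominates $M_r$ in this range.

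\textbf{Conclusion.} Finally $h(r-3)\le Ar^2$, so $L_r\le M_r\le C_0r^2$ with $C_0=\max(A,\text{base constants})$. The main care point is the bookkeeping in the induction step: checking that $s$ stays in the range where the hypothesis applies and that the monotonicity of $h$ is used only for $x\ge1$. As a cross-check, unrolling the recurrence gives the same answer. After $k\approx\log_2 r$ steps the arguments are $r_j\le r/2^j+3$, so
\[
M_r\le 4^kO(1)+\sum_j 4^j\cdot 2r_j=O(r^2).
\]
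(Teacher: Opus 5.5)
Your argument is correct, and it starts from the same input as the paper: the recurrence $M_r\le 4M_{\lceil r/2\rceil+1}+2r$ of Lemma~\ref{lem:first-column-length-recurrence}. The difference is in how you solve it.

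The paper works only along the orbit $2^t+2$ of the map $r\mapsto\lceil r/2\rceil+1$. It sets $P_t=M_{\min(n,2^t+2)}$ and derives the linear recurrence $P_t\le 4P_{t-1}+2^{t+1}+4$. It unrolls this to $P_t\le C_0'4^t$, then uses monotonicity of $M$ to round an arbitrary $r$ up to the next breakpoint, paying a constant factor ($4^t\le 16r^2$).

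You instead run a strong induction directly in $r$ with the ansatz $h(r-3)=A(r-3)^2-2(r-3)-2$. Since $-2x-2$ is an exact particular solution of $f(x)=4f(x/2)+2x+6$, your step closes with no loss. This gives the sharper explicit form $M_r\le A(r-3)^2$, with $A$ fixed by finitely many base values. The price is that you must find the lower-order correction; you correctly observe that the naive hypothesis $M_s\le Cs^2$ does not close. You must also start above the fixed point $r=3$. The paper's dyadic unrolling needs no ansatz and absorbs that fixed point into the single base value $P_0=M_3$. Your closing ``cross-check'' is essentially the paper's argument.

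Two small points. Monotonicity of $M$ (your item (a)) is not actually used in your induction step. With $r_0=5$ the step applies exactly for $r\ge 7$, so only $r=5,6$ need to be checked against $h$; $r=2,3,4$ go into the final constant.
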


\begin{proof}
Define
\[
P_t:=M_{\min(n,2^t+2)}\qquad (t\ge 0).
\]
We claim that, for $t\ge 1$,
\[
P_t\le 4P_{t-1}+2^{t+1}+4.
\]
Indeed, let $2\le s\le \min(n,2^t+2)$. If $s\le 2^{t-1}+2$, then
\[
L_s\le P_{t-1}.
\]
If $s>2^{t-1}+2$, then Lemma~\ref{lem:first-column-length-recurrence} gives
\[
L_s\le 4M_{\lceil s/2\rceil+1}+2s.
\]
Since $s\le 2^t+2$, we have
\[
\Bigl\lceil \frac s2\Bigr\rceil+1\le 2^{t-1}+2.
\]
Also $\lceil s/2\rceil+1\le s\le n$. Hence
\[
M_{\lceil s/2\rceil+1}\le P_{t-1},
\]
and therefore
\[
L_s\le 4P_{t-1}+2^{t+1}+4.
\]
Taking the maximum over all such $s$ proves the claimed recurrence.

Iterating this recurrence yields
\[
P_t
\le
4^tP_0+\sum_{u=1}^t4^{t-u}(2^{u+1}+4).
\]
Now $P_0=M_3$, and $M_3$ is bounded by an absolute constant. Moreover,
\[
\sum_{u=1}^t4^{t-u}2^{u+1}
=
2\cdot4^t\sum_{u=1}^t2^{-u}
<
2\cdot4^t,
\]
and
\[
\sum_{u=1}^t4^{t-u}\cdot4
=
4\sum_{u=1}^t4^{t-u}
<
\frac{4}{3}\,4^t.
\]
Hence there exists an absolute constant $C_0'>0$ such that
\[
P_t\le C_0'4^t
\qquad(t\ge 0).
\]
Now let $2\le r\le n$, and choose $t$ minimal with $r\le 2^t+2$. Then
\[
L_r\le M_r\le P_t\le C_0'4^t.
\]
For $t\ge 1$, the minimality of $t$ implies
\[
2^{t-1}+2<r,
\]
so $4^t\le 16r^2$. Hence
\[
L_r\le 16C_0'\,r^2.
\]
Taking $C_0:=16C_0'$, and increasing it if necessary to handle the finitely many
small cases, proves the proposition.
\end{proof}

\subsection{All lower and upper transvections}

For $i>j$, define
\[
\widetilde\tau_{ij}:=a^{-(j-1)}\omega_{\,i-j+1}a^{j-1}.
\]

\begin{lemma}\label{lem:all-lower-quadratic}
For all $i>j$,
\[
\widetilde\tau_{ij}=T_{i,j},
\]
and there exists an absolute constant $C_1>0$ such that
\[
\ell(\widetilde\tau_{ij})\le C_1 n^2.
\]
\end{lemma}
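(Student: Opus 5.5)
The plan is to handle the identity and the length bound separately, both by direct appeal to earlier results.

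\emph{Identity.} Fix $i>j$ and put $r:=i-j+1$, so that $2\le r\le n-j+1\le n$. By Lemma~\ref{lem:balanced-first-column}, $\omega_r=T_{r,1}$. We then conjugate by $a^{j-1}$ using Lemma~\ref{lem:shift-general} with $p=r$, $q=1$ and $k=j-1\ge 0$. Its hypotheses are $p+k=r+j-1=i\le n$ and $q+k=j\le n$, so no wrap-around occurs. The lemma gives
\[
a^{-(j-1)}T_{r,1}a^{j-1}=T_{r+j-1,\,j}=T_{i,j}.
\]
Hence $\widetilde\tau_{ij}=T_{i,j}$.

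\emph{Length bound.} Since freely reduced length is subadditive, we get
\[
\ell(\widetilde\tau_{ij})\le 2(j-1)+\ell(\omega_{i-j+1})\le 2(n-1)+L_{i-j+1}.
\]
Proposition~\ref{prop:first-column-quadratic} gives $L_{i-j+1}\le C_0(i-j+1)^2\le C_0n^2$. Since $n\ge 3$, we also have $2(n-1)\le n^2$. Combining these, we may take $C_1:=C_0+1$.

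There is no real obstacle here. The only points needing care are the index bookkeeping, namely that the no-wrap condition of Lemma~\ref{lem:shift-general} holds exactly because $i\le n$, and that $i-j+1$ lies in the range $[2,n]$ covered by Lemma~\ref{lem:balanced-first-column} and Proposition~\ref{prop:first-column-quadratic}.
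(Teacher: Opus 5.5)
Your proposal is correct and follows the paper's proof essentially verbatim. Both get the identity from Lemma~\ref{lem:balanced-first-column} plus the no-wrap conjugation of Lemma~\ref{lem:shift-general}, and the length bound from subadditivity plus Proposition~\ref{prop:first-column-quadratic}; your explicit checks $p+k=i\le n$, $q+k=j\le n$ and the choice $C_1=C_0+1$ simply make precise what the paper leaves implicit.
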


\begin{proof}
By Lemma~\ref{lem:balanced-first-column},
\[
\omega_{\,i-j+1}=T_{\,i-j+1,\,1}.
\]
Since no wrap-around occurs in the conjugation by $a^{j-1}$, Lemma~\ref{lem:shift-general}
gives
\[
a^{-(j-1)}T_{\,i-j+1,\,1}a^{j-1}=T_{i,j}.
\]
Thus $\widetilde\tau_{ij}=T_{i,j}$.

Also,
\[
\ell(\widetilde\tau_{ij})
\le
L_{\,i-j+1}+2(j-1)
\le C_0(i-j+1)^2+2n
\le C_1n^2
\]
for a suitable absolute constant $C_1$.
\end{proof}

Next define
\[
\rho_n:=\tau_{1n}
=
\bigl(a^{-(n-1)}ba^{n-1}\bigr)^{\eps_n},
\]
and for $2\le s\le n-1$ define
\[
\rho_s:=\comm{\rho_n}{\widetilde\tau_{ns}}.
\]

\begin{lemma}\label{lem:first-row-quadratic}
For each $2\le s\le n$, one has
\[
\rho_s=T_{1,s},
\]
and there exists an absolute constant $C_2>0$ such that
\[
\ell(\rho_s)\le C_2 n^2.
\]
\end{lemma}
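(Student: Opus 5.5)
The plan is to handle the case \(s=n\) separately from the cases \(2\le s\le n-1\), verifying the identity and the length bound in each.

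\textbf{Case \(s=n\).} Here \(\rho_n=\tau_{1n}\), and Step~1 of the recursive construction (via Lemma~\ref{lem:shift}) already gives \(\rho_n=T_{1,n}\). For the length, the word \(a^{-(n-1)}ba^{n-1}\) has length at most \(2(n-1)+1=2n-1\). Inverting a word does not change its length, so
\[
\ell(\rho_n)\le 2n-1.
\]
This is linear, and therefore well within any \(C_2n^2\).

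\textbf{Case \(2\le s\le n-1\).} For the identity, I will use Lemma~\ref{lem:all-lower-quadratic}, which gives \(\widetilde\tau_{ns}=T_{n,s}\), since \(n>s\). Then
\[
\rho_s=[T_{1,n},T_{n,s}].
\]
The indices \(1,n,s\) are pairwise distinct because \(2\le s\le n-1\). So Lemma~\ref{lem:elementary-identities}(ii), with \((i,j,k)=(1,n,s)\), yields \(\rho_s=T_{1,s}\).

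For the length, the commutator estimate \(\ell([u,v])\le 2\ell(u)+2\ell(v)\) already used in Lemma~\ref{lem:first-column-length-recurrence} gives
\[
\ell(\rho_s)\le 2\ell(\rho_n)+2\ell(\widetilde\tau_{ns})\le 2(2n-1)+2C_1n^2.
\]
Since \(n\ge 3\), this is at most \((2C_1+4)n^2\). Taking \(C_2:=2C_1+4\) then covers both cases uniformly.

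There is no real obstacle. The only points needing care are the ones above: the index-distinctness check that makes the commutator relation valid, and the observation that the sign \(\eps_n\) in \(\rho_n\) is harmless because inverting a word preserves its length. The constant \(C_2\) is absolute because \(C_1\) is absolute.
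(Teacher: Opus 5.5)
Your proof is correct and follows essentially the same route as the paper. The case $s=n$ is the definition $\rho_n=\tau_{1n}=T_{1,n}$ with $\ell(\rho_n)\le 2n-1$. For $2\le s\le n-1$ you combine $\widetilde\tau_{ns}=T_{n,s}$ with the commutator identity $[T_{1,n},T_{n,s}]=T_{1,s}$ and the bound $\ell([u,v])\le 2\ell(u)+2\ell(v)$, exactly as the paper does, and your explicit constant $C_2=2C_1+4$ and the check that $1,n,s$ are distinct make explicit details the paper leaves implicit.
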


\begin{proof}
The case $s=n$ is exactly the definition of $\rho_n=\tau_{1n}=T_{1,n}$.

Now let $2\le s\le n-1$. By Lemma~\ref{lem:all-lower-quadratic},
\[
\widetilde\tau_{ns}=T_{n,s}.
\]
Hence
\[
\rho_s=[T_{1,n},T_{n,s}]=T_{1,s}
\]
by the Steinberg commutator relation.

For the length bound, $\ell(\rho_n)=2n-1$, while
\[
\ell(\rho_s)\le 2\ell(\rho_n)+2\ell(\widetilde\tau_{ns})
\le 2(2n-1)+2C_1n^2
\le C_2n^2
\]
for a suitable absolute constant $C_2$.
\end{proof}

Finally, for $i<j$, define
\[
\widetilde\tau_{ij}:=a^{-(i-1)}\rho_{\,j-i+1}a^{i-1}.
\]

\begin{theorem}\label{thm:quadratic-transvections}
There exists an absolute constant $C>0$ such that for every $n\ge 3$ and every pair of
distinct indices $i,j\in\{1,\dots,n\}$, the transvection $T_{i,j}$ is represented by a
word $\widetilde\tau_{ij}(a,b)$ in the alphabet $\{a^{\pm1},b^{\pm1}\}$ satisfying
\[
\ell(\widetilde\tau_{ij})\le Cn^2.
\]
\end{theorem}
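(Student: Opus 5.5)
The plan splits into the two cases already prepared in the text. For \(i>j\) the word \(\widetilde\tau_{ij}\) was defined before the upper-triangular words, and Lemma~\ref{lem:all-lower-quadratic} already gives both \(\widetilde\tau_{ij}=T_{i,j}\) and \(\ell(\widetilde\tau_{ij})\le C_1n^2\). So only the case \(i<j\) needs work, where \(\widetilde\tau_{ij}=a^{-(i-1)}\rho_{j-i+1}a^{i-1}\). The constant \(C\) will then be the maximum of \(C_1\) and the constant found in the upper case.

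For \(i<j\), first check correctness. Put \(s=j-i+1\), so \(2\le s\le n\). Lemma~\ref{lem:first-row-quadratic} gives \(\rho_s=T_{1,s}\). Next apply Lemma~\ref{lem:shift-general} with \(p=1\), \(q=s\), \(k=i-1\). Its hypotheses are \(1+(i-1)=i\le n\) and \(s+(i-1)=j\le n\), so no wrap-around occurs. Hence
\[
a^{-(i-1)}T_{1,s}a^{i-1}=T_{i,j},
\]
that is, \(\widetilde\tau_{ij}=T_{i,j}\).

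For the length bound I use subadditivity of freely reduced length under concatenation:
\[
\ell(\widetilde\tau_{ij})\le \ell(\rho_s)+2(i-1)\le C_2n^2+2n.
\]
Since \(n\ge 3\), we have \(2n\le n^2\), so this is at most \((C_2+1)n^2\). Taking \(C:=\max(C_1,C_2+1)\) proves the theorem.

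The one point to verify is that the constants \(C_0,C_1,C_2\) are independent of \(n\). This holds because Proposition~\ref{prop:first-column-quadratic} gives an absolute \(C_0\), and Lemmas~\ref{lem:all-lower-quadratic} and~\ref{lem:first-row-quadratic} only add terms linear or quadratic in \(n\) with absolute coefficients. So no real obstacle remains: the heavy lifting, the quadratic bound for the first-column words via the divide-and-conquer recurrence, was done in Proposition~\ref{prop:first-column-quadratic}.
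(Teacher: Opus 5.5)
Your proposal is correct and follows essentially the same route as the paper. The case $i>j$ is Lemma~\ref{lem:all-lower-quadratic}; for $i<j$ you combine $\rho_{j-i+1}=T_{1,j-i+1}$ with the no-wrap conjugation of Lemma~\ref{lem:shift-general} at a cost of $2(i-1)\le 2n$ letters, and your explicit choice $C=\max(C_1,C_2+1)$ simply spells out the constant the paper calls immediate.
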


\begin{proof}
The lower-triangular case $i>j$ is Lemma~\ref{lem:all-lower-quadratic}. For $i<j$, one
has $\rho_{j-i+1}=T_{1,\,j-i+1}$ by Lemma~\ref{lem:first-row-quadratic}, and again no
wrap-around occurs in the conjugation by $a^{i-1}$, so Lemma~\ref{lem:shift-general}
gives
\[
a^{-(i-1)}T_{1,\,j-i+1}a^{i-1}=T_{i,j}.
\]
Thus $\widetilde\tau_{ij}=T_{i,j}$.

The length bound is immediate from Lemmas~\ref{lem:all-lower-quadratic} and
\ref{lem:first-row-quadratic}, since the additional conjugation contributes at most
$2(i-1)\le 2n$.
\end{proof}

\subsection{A polynomial-size \texorpdfstring{$2$}{2}-generator presentation}

Applying Proposition~\ref{prop:tietze-template} with the balanced words
$\widetilde\tau_{ij}(a,b)$ in place of the original words $\tau_{ij}(a,b)$ gives another
explicit $2$-generator presentation of $\SL_n(\ZZ)$, now with controlled size. In this
presentation, every occurrence of $\tau_{ij}$ in Theorem~\ref{thm:uniform-presentation}
is replaced by $\widetilde\tau_{ij}$.

\begin{corollary}\label{cor:polynomial-presentation}
There exist absolute constants $C_3,C_4>0$ such that for every $n\ge 3$, the group
$\SL_n(\ZZ)$ admits a finite presentation on two generators with at most $C_3n^4$
defining relators and total relator length at most $C_4n^6$.
\end{corollary}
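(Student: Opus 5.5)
The plan is to apply Proposition~\ref{prop:tietze-template} to the Steinberg presentation \eqref{eq:steinberg}, using the balanced words $\widetilde\tau_{ij}(a,b)$ of Theorem~\ref{thm:quadratic-transvections} in place of the $\tau_{ij}$. The hypotheses are verified exactly as in the proof of Theorem~\ref{thm:uniform-presentation}. Each $\widetilde\tau_{ij}$ represents $T_{i,j}$ by Theorem~\ref{thm:quadratic-transvections}. The generator $a$ is the word $\sigma_{n-1}\cdots\sigma_1$ in the transvections by Lemma~\ref{lem:a-product}, and $b=T_{2,1}$. The resulting two-generator presentation has relators of four kinds. First, the substituted commutativity relators $[\widetilde\tau_{ij},\widetilde\tau_{kl}]$. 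Second, the substituted commutator relators $[\widetilde\tau_{ij},\widetilde\tau_{jk}]\widetilde\tau_{ik}^{-1}$. Third, the order relator $(\widetilde\tau_{12}\widetilde\tau_{21}^{-1}\widetilde\tau_{12})^4$. Fourth, the back-substitution relators $a=\prod_r(\widetilde\tau_{r,r+1}\widetilde\tau_{r+1,r}^{-1}\widetilde\tau_{r,r+1})$ and $b=\widetilde\tau_{21}$. Here $b=\widetilde\tau_{21}$ holds literally as words, since $\widetilde\tau_{21}=a^0\omega_2a^0=b$, so that relator may be dropped (or kept, at constant cost).

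Next we count relators. The commutativity relators are indexed by pairs of ordered pairs $((i,j),(k,l))$ with $i\ne j$, $k\ne l$, so there are at most $(n(n-1))^2\le n^4$ of them. The commutator relators are indexed by ordered triples of distinct indices, giving at most $n^3$. There are only $O(1)$ further relators. So the total is at most $n^4+n^3+2\le C_3 n^4$.

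Then we bound total length, using $\ell(\widetilde\tau_{ij})\le Cn^2$ from Theorem~\ref{thm:quadratic-transvections}. Each commutativity relator has length at most $4Cn^2$. Each commutator relator has length at most $5Cn^2$. The order relator has length at most $12Cn^2$. The $a$-relator has length at most $1+3(n-1)Cn^2=O(n^3)$. Freely reducing only shortens words, so these upper bounds survive. Summing gives at most $4Cn^2\cdot n^4+5Cn^2\cdot n^3+O(n^3)\le C_4n^6$.

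There is no substantial obstacle here; the only care needed is bookkeeping. We must make sure the constants are uniform in $n\ge3$, which follows because $C$ in Theorem~\ref{thm:quadratic-transvections} is absolute. We must also note that the $a$-relator contributes only $O(n^3)$, which is dominated by the $n^6$ term.
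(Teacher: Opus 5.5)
Your proposal is correct and follows essentially the same argument as the paper. You apply the Tietze template to the Steinberg presentation with the balanced words $\widetilde\tau_{ij}$. You then count $O(n^4)$ commutativity relators, $O(n^3)$ commutator relators and $O(1)$ others, each of length $O(n^2)$ except the $a$-relator of length $O(n^3)$; your explicit constants ($4Cn^2$, $5Cn^2$, $12Cn^2$) just make the paper's $O(\cdot)$ bookkeeping concrete.
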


\begin{proof}
Use the Steinberg presentation \eqref{eq:steinberg} and Proposition~\ref{prop:tietze-template},
with the transvections represented by the words $\widetilde\tau_{ij}(a,b)$ of
Theorem~\ref{thm:quadratic-transvections}. The total relator length is measured in the
free group on $\{a,b\}$, after each displayed equation $U=V$ is interpreted as the
relator $UV^{-1}$.

There are $O(n^4)$ commutativity relations of the form
\[
[\widetilde\tau_{ij},\widetilde\tau_{kl}]=1,
\]
and each has length $O(n^2)$, since each $\widetilde\tau_{ij}$ has length $O(n^2)$.

There are $O(n^3)$ Steinberg commutator relations of the form
\[
[\widetilde\tau_{ij},\widetilde\tau_{jk}]=\widetilde\tau_{ik},
\]
and each again has length $O(n^2)$.

The torsion relation
\[
(\widetilde\tau_{12}\widetilde\tau_{21}^{-1}\widetilde\tau_{12})^4=1
\]
has length $O(n^2)$.

Finally, the relation expressing $a$ as
\[
a=
(\widetilde\tau_{n-1,n}\widetilde\tau_{n,n-1}^{-1}\widetilde\tau_{n-1,n})
\cdots
(\widetilde\tau_{1,2}\widetilde\tau_{2,1}^{-1}\widetilde\tau_{1,2})
\]
has length $O(n^3)$, since it is a product of $n-1$ blocks and each block has length
$O(n^2)$.

Thus the number of relators is $O(n^4)$ and the total relator length is
\[
O(n^4)\cdot O(n^2)+O(n^3)=O(n^6),
\]
as claimed.
\end{proof}

\begin{remark}
Corollary~\ref{cor:polynomial-presentation} shows that once one asks not merely
for an explicit all-rank $2$-generator presentation, but also for uniform control on the
lengths of the transvection words and on the resulting presentation size, the problem is
no longer formal. The balanced first-column recursion above provides such control.
\end{remark}

\section{Low-rank remarks and uniform variants}\label{sec:low-rank-variants}

\subsection{The case \texorpdfstring{$n=3$}{n = 3}}\label{sec:n3}

Conder--Liversidge--Vsemirnov give eight explicit finite $2$-generator presentations for
$\SL_3(\ZZ)$ \cite{Conder}, building on the earlier $3$-generator presentation of
Conder--Robertson--Williams \cite{ConderRW}. Our uniform construction specialises in
rank \(3\) to an explicit \(2\)-generator presentation. Their eighth presentation,
labelled \textup{(h)}, is based on Trott's pair, and for \(n=3\) that pair coincides
with \((a_3,b_3)\). Thus the rank-\(3\) point here is not a new existence statement,
but that Trott's pair and the standard presentation-theoretic mechanism recalled in
Proposition~\ref{prop:tietze-template} arise as the first instance of the uniform
all-rank construction, together with the quantitative bounds proved in
Section~\ref{sec:quantitative}.

\subsection{Further uniform variants of the basic presentation}\label{sec:variants}

The presentation of Theorem~\ref{thm:uniform-presentation} is built from the pair
$(a_n,b_n)$, where $a_n$ has finite order and $b_n$ has infinite order. Because the
construction is explicit, one can make further explicit changes of generators and obtain
new uniform finite presentations on pairs with prescribed order type. We note two such
variants here.

\subsubsection{A uniform infinite--infinite variant}

We now make an explicit change of generators that yields a pair of infinite-order
generators.

\begin{proposition}\label{prop:infinite-pair}
For every \(n\ge 3\), both
\[
x_n:=a_nb_n,\qquad y_n:=b_n
\]
have infinite order, and
\[
a_n=x_ny_n^{-1},\qquad b_n=y_n.
\]
Hence
\[
\SL_n(\ZZ)\cong \bigl\langle x,y \,\big|\, \mathcal R_n(xy^{-1},y)\bigr\rangle .
\]
\end{proposition}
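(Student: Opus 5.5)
The argument splits into three parts: the two identities, which are immediate; the presentation, which is a Tietze substitution; and the infinite order of \(x_n\), which is the only real content.

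\textbf{Identities and orders.} The identities \(a_n=x_ny_n^{-1}\) and \(b_n=y_n\) hold by definition. The element \(y_n=b_n\) has infinite order by Proposition~\ref{prop:orders}. For \(x_n=a_nb_n\), I will look at its characteristic polynomial and show that \(x_n\) has an eigenvalue off the unit circle, or at least one that is not a root of unity. An integer matrix of finite order is diagonalisable with root-of-unity eigenvalues, so its characteristic polynomial is a product of cyclotomic polynomials. It therefore suffices to show that \(\chi_{x_n}\) is not such a product.

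\textbf{The characteristic polynomial (main step).} Write \(a_nb_n=a_n+a_nE_{2,1}\). By the basis action \(a_ne_2=e_1\), we have \(a_nE_{2,1}=E_{1,1}\), so
\[
x_n=a_n+E_{1,1}.
\]
This is the companion-type matrix of \(a_n\) with an extra \(1\) in the \((1,1)\) entry. The characteristic polynomial of \(a_n\) is \(t^n-\eps_n\), since \(a_n^n=\eps_nI_n\) and \(a_n\) is a signed cyclic permutation. The rank-one perturbation \(E_{1,1}\) changes it by \(-\) times the \((1,1)\) cofactor of \(tI-a_n\), which is \(t^{n-1}\) because the lower-right block is upper-unitriangular-like with diagonal \(t\). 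So I expect
\[
\chi_{x_n}(t)=t^n-t^{n-1}-\eps_n,
\]
up to a sign check that I would carry out by expanding along the first row.

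Evaluating at \(t=2\) gives \(2^{n-1}\mp1\), which is not of the form \(\prod\Phi_k(2)\) in any obvious way. A cleaner route is to avoid this and argue directly. Take \(n\) odd, so \(\eps_n=1\) and \(\chi(t)=t^{n-1}(t-1)-1\). Then \(\chi(1)=-1<0\) and \(\chi(t)\to\infty\), so there is a real root \(t>1\). For \(n\) even, \(\chi(t)=t^{n-1}(t-1)+1\) and \(\chi(0)=1\). To locate a real root of modulus different from \(1\), I note that \(\chi(-1)=2+1\ne0\) and look for a real root in \((0,1)\) or a negative real root. If that fails, I use the product of roots, \(|\det x_n|=1\), together with the fact that some root differs from a root of unity.

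A uniform fallback avoids case analysis entirely. If \(x_n\) had finite order \(m\), then every eigenvalue would be a root of unity \(\lambda\) with \(\lambda^{n-1}(\lambda-1)=\eps_n\). Taking absolute values gives \(|\lambda-1|=1\), which forces \(\lambda=e^{\pm i\pi/3}\). Then \(\lambda-1=\lambda^2\), so \(\lambda^{n+1}=\eps_n\). Since all eigenvalues would lie in \(\{e^{\pm i\pi/3}\}\), the characteristic polynomial would be a power of \(t^2-t+1\); its trace is \(1\), so \(n/2=1\), contradicting \(n\ge3\). This completes the infinite-order claim, and verifying \(\chi_{x_n}\) is the main obstacle.

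\textbf{Presentation.} By Theorem~\ref{thm:uniform-presentation}, \(\SL_n(\ZZ)=\langle a,b\mid\mathcal R_n(a,b)\rangle\). I apply the Tietze change of generators \(x=ab\), \(y=b\), with inverse \(a=xy^{-1}\), \(b=y\), which is Proposition~\ref{prop:tietze-template} with \(m=2\). The added relations \(x=(xy^{-1})y\) and \(y=y\) are free-group tautologies and can be dropped. What remains is \(\langle x,y\mid\mathcal R_n(xy^{-1},y)\rangle\).
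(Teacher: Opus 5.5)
Your proof is correct, but it shows that $x_n$ has infinite order by a different route from the paper.

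\textbf{The paper's route.} The paper never computes the characteristic polynomial. It quotes the Conder--Liversidge--Vsemirnov criterion that a finite-order $M\in\SL_n(\QQ)$ satisfies $\tr(M)=\tr(M^{-1})$. It then reads off $\tr(x_n)=1$ from the matrix, and gets $\tr(x_n^{-1})=0$ from $x_n^{-1}=(I_n-E_{2,1})a_n^{-1}$. Only diagonal entries are needed.

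\textbf{Your route.} You compute $\chi_{x_n}(t)=t^n-t^{n-1}-\eps_n$. This is right with no sign left to check: expanding $\det(tI_n-x_n)$ along the first column gives $(t-1)t^{n-1}$ from the $(1,1)$ entry and $-\eps_n$ from the $(n,1)$ entry. You then argue by modulus: $|\lambda|=1$ forces $|\lambda-1|=1$, hence $\lambda=e^{\pm i\pi/3}$. The eigenvalue sum then has real part $n/2$, which must equal $\tr(x_n)=1$.

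This buys two things:
\begin{enumerate}[label=(\roman*)]
\item It is self-contained, with no external criterion.
\item It uses only $|\lambda|=1$, not that $\lambda$ is a root of unity, so it proves more: $x_n$ has an eigenvalue off the unit circle, and its powers grow exponentially.
\end{enumerate}
It is also sharp. For $n=2$ one gets $\chi=t^2-t+1$, and indeed $x_2$ is the order-$6$ block $C$ from the paper.

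Your polynomial already contains the paper's data. The $t^{n-1}$-coefficient gives $\tr(x_n)=1$. For $n\ge3$ the vanishing $t$-coefficient gives $\tr(x_n^{-1})=0$. So the cited criterion would finish the argument in one line.

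\textbf{Presentation fixes.}
\begin{enumerate}[label=(\roman*)]
\item Drop the exploratory middle paragraph. For even $n$, $t^{n-1}(t-1)+1$ has no real roots at all: it is $\ge 1$ for $t\le 0$ and $t\ge 1$, and positive on $(0,1)$. So the ``locate a real root'' strategy cannot succeed there, and only your uniform argument carries the proof.
\item The step ``the characteristic polynomial would be a power of $t^2-t+1$'' tacitly uses that conjugate roots of a real polynomial have equal multiplicity. It is simpler to note that $p\,e^{i\pi/3}+q\,e^{-i\pi/3}$ has real part $(p+q)/2=n/2$ for any $p,q$.
\end{enumerate}

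Your Tietze change of generators is the same as the paper's.
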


\begin{proof}
Since \(y_n=b_n=I_n+E_{2,1}\), Proposition~\ref{prop:orders} shows that \(y_n\) has
infinite order.

It remains to prove that \(x_n=a_nb_n\) has infinite order. We use the following
criterion of Conder--Liversidge--Vsemirnov \cite[Proposition~3.1]{Conder}: if
\(M\in \SL_n(\QQ)\) has finite order, then
\[
\tr(M)=\tr(M^{-1}).
\]

Now
\[
x_n=a_nb_n=a_n(I_n+E_{2,1})=a_n+a_nE_{2,1}.
\]
Because \(a_ne_2=e_1\), one has \(a_nE_{2,1}=E_{1,1}\). Hence
\[
x_n=
\begin{pmatrix}
1&1&0&\cdots&0\\
0&0&1&\ddots&\vdots\\
\vdots&\ddots&\ddots&\ddots&0\\
0&\cdots&0&0&1\\
\eps_n&0&\cdots&0&0
\end{pmatrix}.
\]
In particular,
\[
\tr(x_n)=1.
\]

On the other hand,
\[
x_n^{-1}=b_n^{-1}a_n^{-1}=(I_n-E_{2,1})a_n^{-1}.
\]
The matrix \(a_n^{-1}\) has zero diagonal, since it is a monomial matrix whose underlying
permutation is an \(n\)-cycle. Also \(E_{2,1}a_n^{-1}\) has nonzero entries only in row
\(2\), and in fact only possibly in column \(n\), so it has zero trace for \(n\ge 3\).
Therefore
\[
\tr(x_n^{-1})=0.
\]

Thus
\[
\tr(x_n)\neq \tr(x_n^{-1}).
\]
By the criterion quoted above, \(x_n\) cannot have finite order. Hence \(x_n\) has
infinite order.

Finally,
\[
a_n=x_ny_n^{-1},\qquad b_n=y_n
\]
are immediate from the definitions. Substituting these into the relators
\(\mathcal R_n(a,b)\) of Theorem~\ref{thm:uniform-presentation} yields the displayed
presentation.
\end{proof}

\begin{remark}
Generation of $\SL_n(\ZZ)$ by two elements of infinite order is known in general; see,
for example, \cite{Conder}.
The point here is that the explicit uniform presentation of
Theorem~\ref{thm:uniform-presentation} also admits a completely explicit all-rank
infinite--infinite variant.
\end{remark}

\subsubsection{A uniform finite--finite variant}

We now construct a second uniform change of generators giving a pair of finite order.

Define
\[
u_n:=a_n,\qquad
v_n:=b_n^{-1}T_{1,2}.
\]
Since $b_n^{-1}=I_n-E_{2,1}$ and $T_{1,2}=I_n+E_{1,2}$, one has
\[
v_n=
\begin{pmatrix}
1&1\\
-1&0
\end{pmatrix}\oplus I_{n-2}.
\]

\begin{lemma}\label{lem:finite-pair-bridge}
For every $n\ge 3$, the element $u_n$ has finite order $n$ if $n$ is odd and $2n$ if
$n$ is even, while $v_n$ has order $6$. Moreover, if
\[
W(u,v):=(uvu^{-1})v^{-1}(uv^{-1}u^{-1})v^{-1}(uvu^{-1})v,
\]
then
\[
W(u_n,v_n)=b_n.
\]
\end{lemma}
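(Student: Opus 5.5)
The order statements are quick. The assertion about $u_n=a_n$ is exactly Proposition~\ref{prop:orders}. For $v_n$, I will work in the $2\times 2$ block $B=\begin{pmatrix}1&1\\-1&0\end{pmatrix}$. It has trace $1$ and determinant $1$, so its characteristic polynomial is $x^2-x+1$. By Cayley--Hamilton, $B^2=B-I$, hence $B^3=-I$ and $B^6=I$. No smaller positive power of $B$ is the identity:
\begin{itemize}
\item $B\neq I$;
\item $B^2=B-I\neq I$;
\item $B^3=-I\neq I$;
\item the order must divide $6$, so $4$ and $5$ are excluded.
\end{itemize}
Since $v_n=B\oplus I_{n-2}$, the element $v_n$ has order exactly $6$.

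For the identity $W(u_n,v_n)=b_n$, the plan is to localise the computation to a $3\times 3$ block. Write $v_n=I_n+N$ with $N=E_{1,2}-E_{2,1}-E_{1,1}$. Then $u_nv_nu_n^{\pm1}$-type conjugates are $I_n+a_nN^{\pm}a_n^{-1}$, where $N^{\pm}$ is the nilpotent-free part of $v_n^{\pm1}$; explicitly $v_n^{-1}=I_n+(E_{2,2}-E_{1,2}+E_{2,1})$ up to sign conventions that I will fix by direct inversion of $B$.

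To compute $a_nE_{p,q}a_n^{-1}$, I will use the monomial-conjugation formula from the proof of Lemma~\ref{lem:shift}, applied with $g=a_n^{-1}$. Concretely, $a_n^{-1}e_j=e_{j+1}$ for $j<n$ and $a_n^{-1}e_n=\eps_ne_1$. This gives $a_nE_{p,q}a_n^{-1}=E_{p-1,q-1}$ when $p,q\ge 2$, and for an index equal to $1$ the image index becomes $n$ with a factor $\eps_n$; for example $a_nE_{1,2}a_n^{-1}=\eps_nE_{n,1}$. Consequently:
\begin{itemize}
\item $u_nv_nu_n^{-1}$ and $u_nv_n^{-1}u_n^{-1}$ act nontrivially only on $\langle e_n,e_1\rangle$;
\item $v_n^{\pm1}$ act nontrivially only on $\langle e_1,e_2\rangle$.
\end{itemize}
Thus every factor of $W$ is the identity outside the coordinates $\{n,1,2\}$, and these are three distinct indices since $n\ge3$. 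The whole identity therefore reduces to a product of six explicit $3\times3$ matrices in the ordered basis $(e_n,e_1,e_2)$, with entries involving $\eps_n$.

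The remaining step is to multiply these six matrices and check that the result is $I+E_{2,1}$ in the original indexing. I expect this bookkeeping to be the main obstacle, especially tracking the $\eps_n$ signs so that the answer is independent of the parity of $n$. Each $a_n$-conjugate contributes $\eps_n$ only on its off-diagonal entries linking $e_n$ and $e_1$, so in the final product these should appear as $\eps_n^2=1$ or cancel. I would first do the computation for $\eps_n=1$, the odd case, as a sanity check. I would then redo it with a symbolic $\eps$, using $\eps^2=1$ to confirm that every $e_n$-row and $e_n$-column entry vanishes and that the $\{1,2\}$-block equals $\begin{pmatrix}1&0\\1&1\end{pmatrix}$.

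If a discrepancy appears, the likely culprit is the convention for $u v u^{-1}$ versus $u^{-1}vu$, so I would recheck that against the left-action convention fixed before Lemma~\ref{lem:a-product}.
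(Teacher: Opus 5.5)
Your plan is the paper's proof. The order claims are handled the same way, and your check that $B^k\neq I$ for $k<6$ is a bit more careful than the paper's. For the identity, the paper likewise restricts to $\langle e_1,e_2,e_n\rangle$, on which $v_n^{\pm1}$ act through $\langle e_1,e_2\rangle$ and $u_nv_n^{\pm1}u_n^{-1}$ through $\langle e_1,e_n\rangle$. It then carries out the $3\times 3$ multiplication you defer, and that product is exactly $I+E_{2,1}$, with $\eps_n$ entering only through $\eps_n^2=1$, so the discrepancy you worry about does not arise.

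Before computing, fix two slips: $v_n-I_n=E_{1,2}-E_{2,1}-E_{2,2}$ (not $-E_{1,1}$), and $v_n^{-1}-I_n=-E_{1,1}-E_{1,2}+E_{2,1}$, i.e.\ the block of $v_n^{-1}$ is $B^{-1}=\begin{pmatrix}0&-1\\1&1\end{pmatrix}$.
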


\begin{proof}
The order of $u_n=a_n$ is given by Proposition~\ref{prop:orders}.

For $v_n$, the nontrivial block is
\[
C=\begin{pmatrix}1&1\\ -1&0\end{pmatrix},
\]
and one checks directly that
\[
C^3=-I_2.
\]
Hence $C$ has order $6$, so $v_n$ has order $6$.

It remains to prove the identity $W(u_n,v_n)=b_n$. Set
\[
c_n:=u_nv_nu_n^{-1},\qquad d_n:=u_nv_n^{-1}u_n^{-1}.
\]
Since $v_n$ acts nontrivially only on the span of $e_1,e_2$, while $u_n=a_n$ cyclically
permutes the basis vectors, the matrices $c_n$ and $d_n$ act nontrivially only on the span
of $e_1$ and $e_n$. Therefore the word
\[
W(u_n,v_n)=c_nv_n^{-1}d_nv_n^{-1}c_nv_n
\]
acts trivially on the subspace
\[
U^\perp:=\langle e_3,\dots,e_{n-1}\rangle.
\]
So it suffices to compute its action on
\[
U:=\langle e_1,e_2,e_n\rangle.
\]

With respect to the ordered basis $(e_1,e_2,e_n)$ of $U$, one has
\[
v_n|_U=
\begin{pmatrix}
1&1&0\\
-1&0&0\\
0&0&1
\end{pmatrix},
\qquad
c_n|_U=
\begin{pmatrix}
0&0&-\eps_n\\
0&1&0\\
\eps_n&0&1
\end{pmatrix},
\qquad
d_n|_U=
\begin{pmatrix}
1&0&\eps_n\\
0&1&0\\
-\eps_n&0&0
\end{pmatrix}.
\]
Also,
\[
v_n^{-1}|_U=
\begin{pmatrix}
0&-1&0\\
1&1&0\\
0&0&1
\end{pmatrix}.
\]
Multiplying step by step and using $\eps_n^2=1$, one finds
\[
(c_nv_n^{-1})\big|_U=
\begin{pmatrix}
0&0&-\eps_n\\
1&1&0\\
0&-\eps_n&1
\end{pmatrix},
\qquad
(d_nv_n^{-1})\big|_U=
\begin{pmatrix}
0&-1&\eps_n\\
1&1&0\\
0&\eps_n&0
\end{pmatrix},
\]
and hence
\[
(c_nv_n^{-1}d_nv_n^{-1})\big|_U=
\begin{pmatrix}
0&-1&0\\
1&0&\eps_n\\
-\eps_n&0&0
\end{pmatrix}.
\]
Multiplying by $(c_nv_n)|_U$ on the right now gives
\[
(c_nv_n^{-1}d_nv_n^{-1}c_nv_n)\big|_U=
\begin{pmatrix}
1&0&0\\
1&1&0\\
0&0&1
\end{pmatrix}.
\]
This is exactly the restriction of $b_n=I_n+E_{2,1}$ to $U$, and on $U^\perp$ both
matrices are the identity. Therefore
\[
W(u_n,v_n)=b_n.
\]
\end{proof}

\begin{theorem}\label{thm:finite-pair-presentation}
For every $n\ge 3$, let
\[
u_n:=a_n,\qquad v_n:=b_n^{-1}T_{1,2}.
\]
Then $u_n$ has order $n$ if $n$ is odd and order $2n$ if $n$ is even, while $v_n$ has
order $6$. Moreover, $\SL_n(\ZZ)$ admits a finite presentation on generators $u,v$
corresponding to the pair $(u_n,v_n)$:
\[
\SL_n(\ZZ)\cong
\left\langle u,v \,\middle|\,
\mathcal R_n\bigl(u,W(u,v)\bigr),
v=W(u,v)^{-1}\tau_{1,2}\bigl(u,W(u,v)\bigr)
\right\rangle.
\]
Here $\tau_{1,2}$ denotes the recursive word introduced in
Theorem~\ref{thm:all-transvections}; one may equally use $\widetilde\tau_{1,2}$.
\end{theorem}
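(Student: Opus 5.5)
The plan is to apply Proposition~\ref{prop:tietze-template} a second time, now starting from the two-generator presentation $\langle a,b\mid \mathcal R_n(a,b)\rangle$ of Theorem~\ref{thm:uniform-presentation} instead of the Steinberg presentation. The order statements are already contained in Lemma~\ref{lem:finite-pair-bridge}, so only the presentation needs an argument. We take the old generators to be $x_1=a$, $x_2=b$ and the new generating pair to be $(u_n,v_n)$, and we need explicit words in both directions.

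\emph{Old generators in terms of new ones.} Since $u_n=a_n$, we have $a=u$. Lemma~\ref{lem:finite-pair-bridge} gives $b=W(u,v)$. Hence $w_1(u,v)=u$ and $w_2(u,v)=W(u,v)$, and these hold in $\SL_n(\ZZ)$. In particular, $u_n,v_n$ generate $\SL_n(\ZZ)$ because $a_n,b_n$ do (Theorem~\ref{thm:all-transvections}).

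\emph{New generators in terms of old ones.} By definition $u_n=a_n$, so the word for $u$ is $a$. Also $v_n=b_n^{-1}T_{1,2}$, and Theorem~\ref{thm:all-transvections} gives $T_{1,2}=\tau_{1,2}(a_n,b_n)$, so the word for $v$ is $b^{-1}\tau_{1,2}(a,b)$. By Theorem~\ref{thm:quadratic-transvections} one could use $\widetilde\tau_{1,2}$ instead.

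Proposition~\ref{prop:tietze-template} then yields a presentation on $u,v$ with the following relators:
\begin{itemize}
\item $\mathcal R_n(u,W(u,v))$, obtained by substituting $a\mapsto u$ and $b\mapsto W(u,v)$;
\item the relation $u=u$, which is tautological and can be dropped;
\item the relation $v=W(u,v)^{-1}\tau_{1,2}(u,W(u,v))$.
\end{itemize}
This is exactly the displayed presentation. Replacing $\tau_{1,2}$ by $\widetilde\tau_{1,2}$ changes nothing, since Proposition~\ref{prop:tietze-template} only needs some word equal to $v_n$ in the group.

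Proposition~\ref{prop:tietze-template} is stated for a finite presentation on generators $x_i$ together with explicit back-words, and $\langle a,b\mid\mathcal R_n\rangle$ is such a presentation, so it applies directly. The only step that is not bookkeeping is the identity $W(u_n,v_n)=b_n$, and that is already proved in Lemma~\ref{lem:finite-pair-bridge}. What remains is to check that the substituted relator set is exactly what is displayed, including the omission of the tautological $u$-relation, so I expect no genuine obstacle.
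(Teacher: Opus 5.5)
Your proposal is correct and matches the paper's proof. Both apply Proposition~\ref{prop:tietze-template} to the presentation of Theorem~\ref{thm:uniform-presentation}, with forward words $u=a$ and $v=b^{-1}\tau_{1,2}(a,b)$, back-substitution $a=u$, $b=W(u,v)$ from Lemma~\ref{lem:finite-pair-bridge}, the tautological relation $u=u$ dropped, and the order claims taken from Lemma~\ref{lem:finite-pair-bridge}.
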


\begin{proof}
Introduce new generators $u,v$ with
\[
u=a,\qquad v=b^{-1}\tau_{1,2}(a,b).
\]
For the distinguished pair $(u_n,v_n)$ this gives
\[
u_n=a_n,\qquad v_n=b_n^{-1}T_{1,2}=b_n^{-1}\tau_{1,2}(a_n,b_n).
\]
By Lemma~\ref{lem:finite-pair-bridge}, the inverse substitution is
\[
a=u,\qquad b=W(u,v).
\]
Applying Proposition~\ref{prop:tietze-template} to
Theorem~\ref{thm:uniform-presentation} with these substitutions yields the displayed
presentation, after omitting the tautological relation $u=u$.
\end{proof}

\begin{remark}
If one wants the finite orders to be visible in the presentation, one may adjoin the
redundant relators
\[
u^n=1\quad(n\text{ odd}),\qquad u^{2n}=1\quad(n\text{ even}),\qquad v^6=1.
\]
\end{remark}

\begin{remark}
Again, generation of $\SL_n(\ZZ)$ by two finite-order elements is known in many forms;
see, for example, \cite{Conder}.
The point here is that the uniform presentation on $(a_n,b_n)$ also yields a completely
explicit all-rank finite--finite presentation family.
\end{remark}

\section{Quotients and relator counts}\label{sec:quotients}

The uniform pair $(a_n,b_n)$ has two further advantages that do not depend on any
low-rank calculations: it behaves well under reduction modulo $m$, and in even rank it
descends immediately to an explicit presentation of $\PSL_n(\ZZ)$.

\subsection{Reduction modulo \texorpdfstring{$m$}{m}}

Fix $n\ge 3$ and let $\rho_m:\SL_n(\ZZ)\to \SL_n(\ZZ/m\ZZ)$ be reduction modulo
$m\ge 2$. Write
\[
\bar a_n:=\rho_m(a_n),\qquad \bar b_n:=\rho_m(b_n).
\]

\begin{proposition}\label{prop:modm-transvections}
For every $m\ge 2$ and every pair of distinct indices $i,j$, one has
\[
\tau_{ij}(\bar a_n,\bar b_n)=I_n+\bar E_{ij}\in \SL_n(\ZZ/m\ZZ).
\]
In particular, $\bar a_n$ and $\bar b_n$ generate $\SL_n(\ZZ/m\ZZ)$.
\end{proposition}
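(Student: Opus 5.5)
The first claim follows by transporting Theorem~\ref{thm:all-transvections} along the reduction map; generation then follows from the classical fact that elementary matrices generate $\SL_n$ over $\ZZ/m\ZZ$.

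The key observation is that $\rho_m$ is a group homomorphism. For any word $w$ in the free group $F(a,b)$ one therefore has
\[
w(\bar a_n,\bar b_n)=\rho_m\bigl(w(a_n,b_n)\bigr),
\]
which follows by induction on the length of $w$, since $\rho_m$ preserves products and inverses. Applying this with $w=\tau_{ij}$ and using Theorem~\ref{thm:all-transvections}, we get
\[
\tau_{ij}(\bar a_n,\bar b_n)=\rho_m(T_{ij})=\rho_m(I_n+E_{ij})=I_n+\bar E_{ij}.
\]
No recomputation of the recursion modulo $m$ is needed. In particular, the sign $\eps_n$ in $\tau_{1n}$ causes no difficulty, because the identity already holds over $\ZZ$.

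For generation, I would recall that $\ZZ/m\ZZ$ is a finite commutative ring, hence semilocal. Over a semilocal commutative ring $R$ one has $\SL_n(R)=E_n(R)$, the subgroup generated by the transvections $I_n+rE_{ij}$ with $r\in R$. Since every $r\in\ZZ/m\ZZ$ is the image of an integer $k$, each such matrix equals $(I_n+\bar E_{ij})^k$. Hence the elements $I_n+\bar E_{ij}$ generate $\SL_n(\ZZ/m\ZZ)$, and by the first part they lie in $\langle\bar a_n,\bar b_n\rangle$.

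An alternative, avoiding the semilocal theorem, is to show that $\rho_m$ is surjective. This is the strong approximation/Chinese remainder argument: reduce to prime powers $m=p^e$, use row reduction over the local ring $\ZZ/p^e\ZZ$ to show $\SL_n(\ZZ/p^e\ZZ)$ is generated by elementary matrices, and lift these to $\SL_n(\ZZ)$. Surjectivity together with generation of $\SL_n(\ZZ)$ by $a_n,b_n$ then gives the claim.

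The main obstacle is this classical input, namely $\SL_n(\ZZ/m\ZZ)=E_n(\ZZ/m\ZZ)$, or equivalently surjectivity of $\rho_m$. I would cite it rather than prove it. If a self-contained argument is wanted, I would do Gaussian elimination over the local rings $\ZZ/p^e\ZZ$: some entry in the first column is a unit, so it can be moved to the pivot using transvections and then used to clear that column. Iterating reduces to a diagonal matrix of determinant $1$, which is a product of elementary matrices by the standard $2\times2$ Whitehead identity. The Chinese remainder theorem then assembles the prime-power cases.
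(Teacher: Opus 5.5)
Your proposal is correct and follows essentially the same route as the paper: push Theorem~\ref{thm:all-transvections} through the homomorphism $\rho_m$, then use that $\SL_n(\ZZ/m\ZZ)$ is generated by the elementary matrices $I_n+r\bar E_{ij}=(I_n+\bar E_{ij})^k$. The paper also records the equivalent surjectivity formulation $\rho_m(\SL_n(\ZZ))=\SL_n(\ZZ/m\ZZ)$, which you give as your alternative, and your semilocal or Gaussian-elimination justification supplies the classical fact that the paper quotes without proof.
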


\begin{proof}
Every word $\tau_{ij}(a,b)$ is built from the generators using multiplication, inverse,
and commutator. Since $\rho_m$ is a group homomorphism, it commutes with all of
these operations. Hence
\[
\tau_{ij}(\bar a_n,\bar b_n)
=\rho_m(\tau_{ij}(a_n,b_n))
=\rho_m(T_{ij})
=I_n+\bar E_{ij},
\]
where the middle equality is Theorem~\ref{thm:all-transvections}.

Thus the subgroup generated by $\bar a_n,\bar b_n$ contains all standard elementary
transvections $I_n+\bar E_{ij}$. For $n\ge 3$, the group $\SL_n(\ZZ/m\ZZ)$ is generated
by elementary matrices, and each elementary matrix has the form
\[
I_n+r\bar E_{ij}=(I_n+\bar E_{ij})^r
\]
because $\bar E_{ij}^{\,2}=0$. Hence the standard transvections $I_n+\bar E_{ij}$ already
generate $\SL_n(\ZZ/m\ZZ)$. Equivalently,
\[
\langle \bar a_n,\bar b_n\rangle
=\rho_m(\langle a_n,b_n\rangle)
=\rho_m(\SL_n(\ZZ))
=\SL_n(\ZZ/m\ZZ). 
\]
\end{proof}

\begin{corollary}\label{cor:modm-orders}
For every $m\ge 2$, the pair $(\bar a_n,\bar b_n)$ explicitly generates
$\SL_n(\ZZ/m\ZZ)$, with
\[
\ord(\bar b_n)=m,
\]
and
\[
\ord(\bar a_n)\mid n \quad\text{if }n\text{ is odd},\qquad
\ord(\bar a_n)\mid 2n \quad\text{if }n\text{ is even}.
\]
\end{corollary}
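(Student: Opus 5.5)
The corollary has three parts: generation, the order of $\bar b_n$, and a divisibility statement for the order of $\bar a_n$. I would treat each in turn.

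\emph{Generation.} This is already Proposition~\ref{prop:modm-transvections}. The words $\tau_{ij}$ evaluated at $(\bar a_n,\bar b_n)$ give every standard transvection $I_n+\bar E_{ij}$, and these generate $\SL_n(\ZZ/m\ZZ)$. The same words therefore exhibit the generation explicitly, so nothing new is needed here.

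\emph{Order of $\bar b_n$.} I would reduce the formula $b_n^k=I_n+kE_{2,1}$ from the proof of Proposition~\ref{prop:orders} modulo $m$. Since $E_{2,1}^2=0$, this gives
\[
\bar b_n^{\,k}=I_n+k\bar E_{2,1}
\]
for every integer $k$. The $(2,1)$ entry of this matrix is $k \bmod m$, so $\bar b_n^{\,k}=I_n$ exactly when $m\mid k$. Hence $\ord(\bar b_n)=m$. The only point to note is that $\bar E_{2,1}\neq 0$, which holds because $m\ge 2$.

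\emph{Order of $\bar a_n$.} Here I would use that $\rho_m$ is a homomorphism, so $\ord(\rho_m(g))$ divides $\ord(g)$ whenever the latter is finite. By Proposition~\ref{prop:orders}, $a_n^n=I_n$ for $n$ odd and $a_n^{2n}=I_n$ for $n$ even. Applying $\rho_m$ gives $\bar a_n^{\,n}=I_n$, respectively $\bar a_n^{\,2n}=I_n$, which yields the two divisibilities.

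There is no real obstacle. The one subtlety is that equality need not hold for the order of $\bar a_n$: when $m=2$ we have $-I_n\equiv I_n$, so for even $n$ the order drops to $n$. This is why the statement only claims divisibility, and my argument will not attempt more.
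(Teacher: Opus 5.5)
Your proposal is correct and follows essentially the same route as the paper: generation is inherited from Proposition~\ref{prop:modm-transvections}, $\ord(\bar b_n)=m$ comes from reducing $b_n^k=I_n+kE_{2,1}$ modulo $m$, and the divisibilities come from reducing $a_n^n=\eps_n I_n$ (equivalently $a_n^{2n}=I_n$) modulo $m$. Your side remark is also correct: for even $n$ and $m=2$ one has $-I_n\equiv I_n$, so the order of $\bar a_n$ drops to $n$, which is why only divisibility is claimed.
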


\begin{proof}
Since $\bar b_n=I_n+\bar E_{21}$ and $\bar E_{21}^2=0$, one has
\[
\bar b_n^r=I_n+r\bar E_{21}
\]
for every integer $r$. Hence $\bar b_n^r=I_n$ if and only if $r\equiv 0\pmod m$, so
$\ord(\bar b_n)=m$.

Also $a_n^n=\eps_n I_n$ in $\SL_n(\ZZ)$ by Proposition~\ref{prop:orders}. Reducing
modulo $m$ gives
\[
\bar a_n^n=\eps_n I_n.
\]
Therefore $\ord(\bar a_n)\mid n$ when $\eps_n=1$ and $\ord(\bar a_n)\mid 2n$ when
$\eps_n=-1$.
\end{proof}

\subsection{An explicit presentation for the projective quotient}

We now use the same presentation to pass to the projective quotient.

\begin{lemma}\label{lem:center}
The centre of $\SL_n(\ZZ)$ is
\[
Z(\SL_n(\ZZ))=
\begin{cases}
\{I_n\}, & n\text{ odd},\\
\{\pm I_n\}, & n\text{ even}.
\end{cases}
\]
\end{lemma}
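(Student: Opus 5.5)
The plan is to show that a central element must be scalar, and then determine which scalars lie in \(\SL_n(\ZZ)\).

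Let \(g\in Z(\SL_n(\ZZ))\). For \(i\neq j\), \(g\) commutes with \(T_{i,j}=I_n+E_{i,j}\), so \(gE_{i,j}=E_{i,j}g\). The matrix \(gE_{i,j}\) has column \(j\) equal to column \(i\) of \(g\) and all other columns zero. The matrix \(E_{i,j}g\) has row \(i\) equal to row \(j\) of \(g\) and all other rows zero.

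Comparing entries outside row \(i\) in column \(j\) gives \(g_{k,i}=0\) for every \(k\neq i\). Comparing entries outside column \(j\) in row \(i\) gives \(g_{j,l}=0\) for every \(l\neq j\). Comparing the \((i,j)\) entry gives \(g_{i,i}=g_{j,j}\).

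Since this holds for every ordered pair \(i\neq j\), the matrix \(g\) is diagonal with all diagonal entries equal. Hence \(g=\lambda I_n\) with \(\lambda\in\ZZ\).

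Next, \(\det g=\lambda^n=1\) with \(\lambda\in\ZZ\) forces \(\lambda=\pm1\). Moreover \(\lambda=-1\) is possible only when \(n\) is even, since \((-1)^n=1\) exactly then.

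Conversely, scalar matrices commute with everything. Also \(I_n\in\SL_n(\ZZ)\) always, and \(-I_n\in\SL_n(\ZZ)\) precisely when \(n\) is even. In the even case one can also see this directly: \(-I_n=a_n^n\) by Proposition~\ref{prop:orders}.

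This gives both inclusions, so the centre is \(\{I_n\}\) for odd \(n\) and \(\{\pm I_n\}\) for even \(n\). There is no real obstacle. The only care needed is the index bookkeeping in the comparison of \(gE_{i,j}\) with \(E_{i,j}g\), and the hypothesis \(n\ge 3\) is not needed (\(n\ge2\) suffices).
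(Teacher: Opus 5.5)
Your proof is correct and follows the same route as the paper: commuting with each $T_{i,j}$ gives $gE_{i,j}=E_{i,j}g$, which you unpack entrywise where the paper evaluates both sides on $e_j$, and both arguments then get a scalar matrix $\lambda I_n$ with $\lambda^n=1$, so $\lambda=\pm1$. You also spell out the reverse inclusion (scalars are central, and $-I_n\in\SL_n(\ZZ)$ exactly when $n$ is even), which the paper leaves implicit.
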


\begin{proof}
Let $z\in Z(\SL_n(\ZZ))$. Since $z$ commutes with every elementary transvection
$T_{ij}=I_n+E_{ij}$, it also commutes with every matrix unit $E_{ij}$:
\[
zE_{ij}=E_{ij}z\qquad (i\ne j).
\]
Applying both sides to $e_j$ gives
\[
ze_i=zE_{ij}e_j=E_{ij}ze_j.
\]
Since $E_{ij}ze_j$ is always a multiple of $e_i$, it follows that $ze_i\in \ZZ e_i$ for
every $i$, so each coordinate line $\ZZ e_i$ is $z$-stable. Therefore $z$ is diagonal:
say $ze_i=\lambda_i e_i$ with $\lambda_i\in \ZZ$. Applying $zE_{ij}=E_{ij}z$ to $e_j$
again yields
\[
\lambda_i e_i=ze_i=zE_{ij}e_j=E_{ij}ze_j=\lambda_j e_i,
\]
so $\lambda_i=\lambda_j$ for all $i\neq j$. Thus $z=\lambda I_n$ for some integer
$\lambda$. Since $z\in \SL_n(\ZZ)$, one has
$\lambda=\pm 1$, and $\det(z)=\lambda^n=1$. Hence $\lambda=1$ if $n$ is odd, while
$\lambda=\pm1$ if $n$ is even.
\end{proof}

\begin{theorem}\label{thm:psl-presentation}
For odd $n\ge 3$, the quotient map
$\SL_n(\ZZ)\twoheadrightarrow \PSL_n(\ZZ)$ is an isomorphism, so
$\PSL_n(\ZZ)$ has the presentation of Theorem~\ref{thm:uniform-presentation}. For even
$n\ge 4$, one has
\[
\PSL_n(\ZZ)\cong \left\langle a,b\ \middle|\ \mathcal R_n(a,b),\ a^n=1\right\rangle.
\]
\end{theorem}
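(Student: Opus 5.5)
The proof combines Lemma~\ref{lem:center} with the standard fact that adding a generating set of a normal subgroup as relators presents the quotient.

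\emph{Odd $n$.} By Lemma~\ref{lem:center} the centre of $\SL_n(\ZZ)$ is trivial. Hence the quotient map $\SL_n(\ZZ)\to\PSL_n(\ZZ)=\SL_n(\ZZ)/Z(\SL_n(\ZZ))$ is an isomorphism, and the presentation of Theorem~\ref{thm:uniform-presentation} transfers verbatim.

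\emph{Even $n$.} Lemma~\ref{lem:center} gives $Z(\SL_n(\ZZ))=\{\pm I_n\}$, a normal subgroup of order two generated by $-I_n$. The key input is Proposition~\ref{prop:orders}, whose proof shows $a_n^n=\eps_nI_n=-I_n$ when $n$ is even. So the central element $-I_n$ is represented by the word $a^n$ in the presentation $\langle a,b\mid\mathcal R_n(a,b)\rangle$ of Theorem~\ref{thm:uniform-presentation}.

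The general principle is this. If $G=\langle X\mid R\rangle$ and $N\trianglelefteq G$ is the normal closure of elements represented by words $S$, then $G/N=\langle X\mid R\cup S\rangle$. This holds because the normal closure of $R\cup S$ in the free group is the preimage of the normal closure of $S$ in $G$. Here $N=\{\pm I_n\}$ is already normal and is generated by the image of $a^n$. Its normal closure is therefore $N$ itself, and adjoining the relator $a^n$ gives exactly $\SL_n(\ZZ)/\{\pm I_n\}=\PSL_n(\ZZ)$.

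There is no real obstacle. The only points needing care are the identification $a_n^n=-I_n$, which was already established in the proof of Proposition~\ref{prop:orders}, and the statement that $N$ is precisely the kernel of $\SL_n(\ZZ)\to\PSL_n(\ZZ)$, which is Lemma~\ref{lem:center}. One might add that $a^{2n}=1$ then holds automatically, so the single relator $a^n=1$ suffices.
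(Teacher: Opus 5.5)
Your proposal is correct and matches the paper's proof essentially step for step: for odd $n$ you use the trivial centre from Lemma~\ref{lem:center}, and for even $n$ you combine $a_n^n=-I_n$ (Proposition~\ref{prop:orders}) with $Z(\SL_n(\ZZ))=\{\pm I_n\}$, so adjoining $a^n=1$ kills exactly the centre. Your explicit justification that adjoining relators $S$ presents the quotient by the normal closure of $S$ is a step the paper leaves implicit, and you state it correctly.
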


\begin{proof}
By Lemma~\ref{lem:center}, the centre of $\SL_n(\ZZ)$ is trivial when $n$ is odd, so
the quotient map $\SL_n(\ZZ)\to \PSL_n(\ZZ)$ is an isomorphism.

Now assume $n$ is even. Then Proposition~\ref{prop:orders} gives
\[
a_n^n=-I_n.
\]
By Lemma~\ref{lem:center}, the subgroup generated by $-I_n$ is exactly the centre of
$\SL_n(\ZZ)$. Therefore
\[
\PSL_n(\ZZ)\cong \SL_n(\ZZ)/\langle -I_n\rangle
=\SL_n(\ZZ)/\langle a_n^n\rangle.
\]
Starting from the presentation of Theorem~\ref{thm:uniform-presentation}, adjoining
the additional relation $a^n=1$ therefore kills precisely the centre, and yields the
displayed presentation of $\PSL_n(\ZZ)$.
\end{proof}

\subsection{A quantitative count of relators}

The presentation of Theorem~\ref{thm:uniform-presentation} is explicit but highly
redundant. One can at least count its relators exactly.

\begin{proposition}\label{prop:relator-count}
With the commutativity relators taken up to reversal,
\[
[u,v]=1 \qquad\text{and}\qquad [v,u]=1,
\]
and with the tautological relation $b=\tau_{2,1}(a,b)$ omitted, the presentation of
Theorem~\ref{thm:uniform-presentation} may be written with exactly
\[
\frac12\,n(n+1)(n-1)(n-2)+2
\]
defining relators.
\end{proposition}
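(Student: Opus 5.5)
The plan is to count the four families of relators in Theorem~\ref{thm:uniform-presentation} separately and add the results. The torsion relator $\bigl(\tau_{1,2}\tau_{2,1}^{-1}\tau_{1,2}\bigr)^4=1$ contributes $1$. The relation expressing $a$ as the product of the blocks $\tau_{r,r+1}\tau_{r+1,r}^{-1}\tau_{r,r+1}$ contributes $1$. The relation $b=\tau_{2,1}$ is omitted by hypothesis. So the two constant terms account for the $+2$, and it remains to count the commutator relations and the commutativity relations.

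\textbf{Commutator relations.} The relations $[\tau_{i,j},\tau_{j,k}]=\tau_{i,k}$ are indexed by ordered triples $(i,j,k)$ of distinct indices. There are $n(n-1)(n-2)$ of them, and no identification is made among them.

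\textbf{Commutativity relations.} Put $N=n(n-1)$, the number of off-diagonal positions. I first count ordered pairs of distinct positions $\bigl((i,j),(k,l)\bigr)$, giving $N(N-1)$, and then remove the bad pairs, namely those with $j=k$ or $i=l$.
\begin{itemize}
\item For $j=k$, the pairs have the form $(i,j),(j,l)$ with $i\ne j\ne l$. There are $n(n-1)^2$ of these, and all are automatically distinct positions.
\item The case $i=l$ is symmetric and also gives $n(n-1)^2$.
\item The overlap $j=k$, $i=l$ consists of the pairs $(i,j),(j,i)$, of which there are $n(n-1)$.
\end{itemize}
By inclusion--exclusion, the number of admissible ordered pairs is
\[
n(n-1)\bigl(n(n-1)-1\bigr)-2n(n-1)^2+n(n-1)=n(n-1)(n^2-3n+2)=n(n-1)^2(n-2).
\]
The admissibility condition ($i\ne l$, $j\ne k$, $(i,j)\neq(k,l)$) is symmetric under swapping the two positions. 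Hence the reversal $[u,v]\leftrightarrow[v,u]$ is a fixed-point-free involution on this set, and taking one orientation from each inverse pair leaves $\tfrac12 n(n-1)^2(n-2)$ relators.

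\textbf{Sum.} Adding the families gives
\[
\tfrac12 n(n-1)^2(n-2)+n(n-1)(n-2)+2=\tfrac12 n(n-1)(n-2)\bigl((n-1)+2\bigr)+2=\tfrac12 n(n+1)(n-1)(n-2)+2,
\]
which is the stated count.

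The only delicate step is the inclusion--exclusion for the commutativity relations. In particular, I must check that the bad pairs in the case $j=k$ never coincide as positions, and that the overlap of the two bad cases is counted exactly once. Apart from that, the argument is bookkeeping: the statement counts relators as written in the list, so no further redundancy among relators needs to be ruled out.
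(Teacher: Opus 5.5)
Your proposal is correct and follows essentially the paper's argument. It uses the same four-family decomposition, the same $n(n-1)(n-2)$ commutator count, and the same halving of the $n(n-1)^2(n-2)$ ordered commuting pairs. The only cosmetic difference is how those pairs are counted: the paper fixes $(i,j)$, counts its $(n-1)(n-2)$ admissible partners $(k,l)$ (also via an equivalent per-position inclusion--exclusion), and multiplies by $n(n-1)$, whereas you run the inclusion--exclusion globally over ordered pairs of positions, which is that same count summed over $(i,j)$.
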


\begin{proof}
For a fixed transvection $T_{ij}$, the number of distinct transvections $T_{kl}\neq T_{ij}$
satisfying the Steinberg commutativity condition $i\neq l$ and $j\neq k$ is
\[
(n-1)^2-(n-2)-1=(n-1)(n-2).
\]
Indeed, once $k\neq l$, $i\neq l$, and $j\neq k$ are imposed, the index $k$ may be any
element of $\{1,\dots,n\}\setminus\{j\}$ and the index $l$ may be any element of
$\{1,\dots,n\}\setminus\{i\}$, giving $(n-1)^2$ ordered pairs $(k,l)$. Among these,
one must still exclude the $n-2$ pairs with $k=l\notin\{i,j\}$ and the remaining pair
$(i,j)$ itself.

Equivalently, starting from all $n(n-1)$ ordered pairs $(k,l)$ with $k\neq l$, one
excludes the pair $(i,j)$, the $n-1$ pairs with $k=j$, and the $n-1$ pairs with $l=i$,
then adds back the overlap $(j,i)$. This again gives $(n-1)(n-2)$.

Hence the total number of ordered commuting pairs is
\[
n(n-1)\cdot (n-1)(n-2)=n(n-1)^2(n-2).
\]
The two relators $[T_{ij},T_{kl}]=1$ and $[T_{kl},T_{ij}]=1$ are inverse relators, so one
may keep only one orientation. Thus the number of distinct commutativity relators is
\[
\frac12\,n(n-1)^2(n-2).
\]

Next, the Steinberg relations of the form $[T_{ij},T_{jk}]=T_{ik}$ are indexed by ordered
triples of distinct indices, so there are exactly
\[
n(n-1)(n-2)
\]
such relators.

Finally, there is one torsion relator
\[
(T_{12}T_{21}^{-1}T_{12})^4=1
\]
and one relator expressing $a$ in terms of the transvections. Therefore the total number
of relators is
\[
\frac12\,n(n-1)^2(n-2)+n(n-1)(n-2)+2
=\frac12\,n(n+1)(n-1)(n-2)+2.
\]
\end{proof}

The same relator count applies to the balanced presentation obtained in
Section~\ref{sec:quantitative}, since the number of defining relations is unchanged and
only the chosen words representing the transvections have been replaced.

\subsection{A bounded-girth remark for the fixed-generator congruence family}

For fixed $n\ge 3$, prime $p$, and $k\ge 1$, consider the Cayley graph
\[
X_{n,p^k}:=\Cay\bigl(\SL_n(\ZZ/p^k\ZZ),\{\bar a_{n,p^k}^{\pm1},\bar b_{n,p^k}^{\pm1}\}\bigr),
\]
where $\bar a_{n,p^k},\bar b_{n,p^k}$ denote the reductions of $a_n,b_n$ modulo $p^k$.

\begin{remark}\label{rem:bounded-girth}
For fixed $n\ge 3$ and prime $p$, the family $(X_{n,p^k})_{k\ge 1}$ is an expander
family by the property~\textup{($\tau$)} consequence of property~\textup{(T)} for
$\SL_n(\ZZ)$ with respect to the principal congruence subgroups; see
\cite[Chapter~1]{BekkaValette} and \cite[Chapter~4]{LubotzkyExp}. Its girth is uniformly
bounded because $\bar a_{n,p^k}^{\,2n}=1$ by Corollary~\ref{cor:modm-orders}, so each
graph contains a nontrivial closed walk of length at most $2n$, and hence a cycle of
length at most $2n$ after passing to the corresponding reduced cycle. Thus this fixed-generator congruence family is very
different from logarithmic-girth constructions based on fixed matrices generating a free
subgroup, such as those of Arzhantseva--Biswas \cite{ABgirth}.
\end{remark}

\section{Related generating families and concluding remarks}\label{sec:extensions}

\subsection{Odd rank and Trott's pair}

When $n$ is odd, one has $\eps_n=1$, so the matrix $a_n$ in \eqref{eq:anbn} is exactly
the cyclic permutation matrix used by Trott \cite{Trott}. Thus in odd rank the pair
$(a_n,b_n)$ is precisely Trott's pair. In that sense, the present construction may be
viewed as a presentation-theoretic extraction from Trott's generating theorem, extended
uniformly to even rank by the sign-corrected wrap-around entry.

Viewed against the two classical uniform generating constructions, the family $(a_n,b_n)$
therefore sits in between them: in odd rank it is exactly Trott's pair, while across all
ranks $n\ge 3$ it avoids the rank-$4$ exception in Gow--Tamburini's Jordan/transposed-
Jordan family.

\subsection{Concluding remarks}

The construction gives a uniform Steinberg-based route from the transvection presentation
to explicit two-generator presentations for the monomial/transvection pair $(a_n,b_n)$.
The balanced recursion provides polynomial control on the resulting words and relators.
The additional variants and quotient presentations are formal consequences of the same
explicit transvection recovery. We do not address finite-index subgroups. In a different direction, Meiri proved that
every finite-index subgroup of \(\SL_n(\ZZ)\) contains a further finite-index subgroup
generated by two elements \cite{Meiri}.

\section*{Acknowledgements}
The formal Tietze-elimination step of Proposition~\ref{prop:tietze-template} was
first stated in the MathSciNet review of the paper of Conder--Liversidge--Vsemirnov by Jack Button, with attribution to Philip Hall. I thank him for drawing my attention to this.

\end{document}